\documentclass[11pt,reqno]{amsart}

\usepackage[a4paper,
            top=30mm,bottom=30mm,
            left=33mm,right=33mm]{geometry}

\usepackage[T1]{fontenc}
\usepackage[unicode=true,hidelinks]{hyperref}
\usepackage{amsmath, amsthm, amsfonts, amssymb, mathtools}

\theoremstyle{plain}
\newtheorem{theorem}{Theorem}[section]
\newtheorem{lemma}[theorem]{Lemma}

\newtheorem{proposition}[theorem]{Proposition}

\theoremstyle{remark}

\numberwithin{equation}{section}

\newcommand{\eps}{\varepsilon}
\newcommand{\dy}{\mathrm{d} y}
\newcommand{\rd}{\mathrm{d}}
\newcommand{\N}{\mathbb{N}}
\newcommand{\R}{\mathbb{R}}
\newcommand{\Q}{\mathbb{Q}}
\newcommand{\Z}{\mathbb{Z}}

\newcommand{\bee}{\begin{equation}}
\newcommand{\eee}{\end{equation}}

\title[]{Lorentz estimates for multilinear \\ convolution operators}

\author[]{Nuno J. Alves}
\author[]{Ana \v{C}olovi\'{c}}
\author[]{Loukas Grafakos}

\address[N. J. Alves]{
      CEMSE Division, King Abdullah University of Science and Technology (KAUST), Thuwal 23955-6900, Saudi Arabia.}
\email{nuno.januarioalves@kaust.edu.sa}

\address[A. \v{C}olovi\'{c}]{
      Department of Mathematics, University of Missouri, Columbia MO 65211, USA.}
\email{acg7y@umsystem.edu}

\address[L. Grafakos]{
      Department of Mathematics, University of Missouri, Columbia MO 65211, USA.}
\email{grafakosl@umsystem.edu}

\begin{document}

\begin{abstract}
We study O'Neil-type inequalities in Lorentz spaces for two extremal geometric configurations of translation-invariant multilinear convolution operators. For the one-parameter model, we complement the existing weak-kernel theory with estimates for kernels in the Lorentz space $L^{r,s}$, obtaining a stronger target Lorentz space when $s$ is finite. For the full-dimensional model, we prove a Lorentz refinement of Oberlin's multilinear Young inequality in the interior of its admissible range. For nonnegative full-dimensional kernels, we extend H\"ormander's necessary condition to the multilinear setting in Lorentz spaces and obtain an additional restriction on the secondary indices in the critical case. We also prove the failure of the full-dimensional weak $L^1$ endpoint, use additive combinatorics to construct one-parameter restricted weak-type counterexamples for $p<1$, and obtain complementary positive results for kernels of special structure.
\end{abstract}

\subjclass[2020]{Primary 42B20; Secondary 46E30}
\keywords{Multilinear convolution, Lorentz spaces, multilinear interpolation, O'Neil's inequality.}
\maketitle
\thispagestyle{empty}

\section{Introduction}

This article is concerned with Lorentz norm estimates for multilinear convolution operators. Recall that for measurable functions $f_1,f_2$ on $\R^n$, the (linear) convolution $f_1*f_2$ is a commutative operation defined pointwise by 
\[
(f_1*f_2)(x)
:=
\int_{\R^n}f_1(x-y)f_2(y)\,\rd y, \qquad x\in \R^n,
\]
provided that the integral converges absolutely for the given $x\in\R^n$.

The classical Young convolution inequality on $\R^n$ states that, for
$1\leq p_1,p_2,p\leq\infty$ satisfying the exponent relation
\bee\label{Young100}
1+\frac1p=\frac1{p_1}+\frac1{p_2},
\eee
one has
\bee\label{99}
\|f_1*f_2\|_p
\leq
\|f_1\|_{p_1}\|f_2\|_{p_2}
\eee
for all $f_1\in L^{p_1}(\R^n)$ and $f_2\in L^{p_2}(\R^n)$. Here and throughout this paper, the $L^p$ norm 
of a function $f$ is denoted by $\|f\|_{p}= ( \int_{\R^n} |f|^p \, dx)^{1/p}$.

Before stating the extension of estimate \eqref{99}  to Lorentz spaces we 
recall the relevant definitions. Let $f^*$ denote the decreasing rearrangement of a
measurable function $f$ on $\R^n$. For $0<p<\infty$ and $0<q\leq\infty$, we define the quantities 
\[
\|f\|_{p,q}
:=
\begin{dcases}
\displaystyle
\left(
\int_0^\infty
\big(t^{1/p}f^*(t)\big)^q
\frac{\rd t}{t}
\right)^{1/q},
& 0<q<\infty,\\[2mm]
\displaystyle
\sup_{t>0}t^{1/p}f^*(t),
& q=\infty.
\end{dcases}
\]
The Lorentz space $L^{p,q}(\R^n)$ consists of all measurable functions $f$ for
which $\|f\|_{p,q}$ is finite. In particular, $L^{p,p}(\R^n)=L^p(\R^n)$,
while $L^{p,\infty}(\R^n)$ is the usual weak Lebesgue space. We use the same
notation for Lorentz quasi-norms on the other Euclidean spaces appearing in
the paper. For standard properties of decreasing rearrangements and Lorentz
spaces, see, for example,~\cite[Chapter~1]{grafakos2014classical}.

O'Neil~\cite{oneil1963convolution} proved that, if $1<p_1,p_2,p<\infty$ satisfy Young's exponent relation 
\eqref{Young100} and $0<q_1,q_2,q\leq\infty$ satisfy
\[
\frac1q
\leq
\frac1{q_1}+\frac1{q_2},
\]
then there exists a positive constant
$C=C(p_1,p_2,q_1,q_2,q)$ such that
\[
\|f_1*f_2\|_{p,q}
\leq
C\,
\|f_1\|_{p_1,q_1}
\|f_2\|_{p_2,q_2}
\]
for all $f_1\in L^{p_1,q_1}(\R^n)$ and
$f_2\in L^{p_2,q_2}(\R^n)$. Further developments of convolution theory in Lorentz spaces include the work
of Yap~\cite{yap1969remarks} and
Blozinski~\cite{blozinski1972convolution,blozinski1972proceedings}; the latter
also established the sharpness of the admissible range of indices. Nursultanov
and Tikhonov~\cite{nursultanov2011convolution} studied endpoint and weighted Lorentz estimates, while Nursultanov, Tikhonov, and
Tleukhanova~\cite{nursultanov2018norm} obtained sharper estimates for convolution operator norms in
Lebesgue spaces. These works form part of the linear
background for the multilinear theory developed here.

In this work, we establish  O'Neil-type Lorentz estimates for two classes of translation-invariant multilinear convolution operators. The first is the one-parameter model. Let $\alpha_1,\ldots,\alpha_m$ be nonzero pairwise distinct real numbers. For measurable functions $f_1,\ldots,f_m,g$ on $\R^n$, we define
\begin{equation}\label{eq:one_parameter_operator}
T_g(f_1,\ldots,f_m)(x)
:=
\int_{\R^n}
\Big(\prod_{i=1}^m f_i(x-\alpha_i y)\Big)
g(y)\,\rd y, 
\end{equation}
assuming that the integral converges absolutely for the given $x\in \R^n$. 
Guliyev and Nazirova~\cite{guliyev2008oneil} established a weak-kernel
analogue of Young's inequality for this operator. More precisely, they proved that, if
$1<p_1,\ldots,p_m,r<\infty$ and $1\leq p<\infty$ satisfy
\[
\frac1p
=
\sum_{i=1}^m\frac1{p_i}
+
\frac1r
-
1,
\]
then there exists a positive constant $C=C(n,m,\alpha_1,\ldots,\alpha_m,p_1,\ldots,p_m,r)$
such that
\[
\big\|T_g(f_1,\ldots,f_m)\big\|_p
\leq
C
\Big(\prod_{i=1}^m\|f_i\|_{p_i}\Big)
\|g\|_{r,\infty},
\]
for all $f_i\in L^{p_i}(\R^n)$, $i=1,\ldots,m$, and all $g\in L^{r,\infty}(\R^n)$. Lorentz estimates for weak kernels were subsequently obtained by Guliyev,
Ekincioglu, and Nazirova~\cite{guliyev2015lorentz}. Our first main result complements this theory by capturing the contribution of a finite kernel
Lorentz index to the target space; see Theorem~\ref{thm:one-parameter}.

The second is the full-dimensional model. If $K$ is a measurable function on
$(\R^n)^m$ and $f_1,\ldots,f_m$ are measurable functions on $\R^n$, we define
\begin{equation}\label{eq:full_kernel_operator}
T^K(f_1,\ldots,f_m)(x)
:=
\int_{(\R^n)^m}
\Big(\prod_{i=1}^m f_i(x-y_i)\Big)
K(y_1,\ldots,y_m)
\,\rd y_1\cdots\rd y_m, 
\end{equation}
again under the assumption that the integral converges absolutely. 
Oberlin~\cite{oberlin1988multilinear} established a multilinear version of
Young's convolution inequality for~\eqref{eq:full_kernel_operator}. More
precisely, Oberlin~\cite{oberlin1988multilinear} showed that if $1\leq p_1,\ldots,p_m,r,p\leq\infty$ satisfy
\[
m+\frac1p
=
\sum_{i=1}^m\frac1{p_i}
+
\frac mr,
\]
together with
\[
\frac1p\leq\frac1r
\qquad\text{and}\qquad
1-\frac1r\leq\frac1{p_i},
\quad i=1,\ldots,m,
\]
then there exists a positive constant $C=C(n,m,p_1,\ldots,p_m,r,p)$ such that
\bee\label{Oberlin88}
\big\|T^K(f_1,\ldots,f_m)\big\|_p
\leq
C
\Big(\prod_{i=1}^m\|f_i\|_{p_i}\Big)
\|K\|_r
\eee
for all $f_i\in L^{p_i}(\R^n)$, $i=1,\ldots,m$, and all $K\in L^r((\R^n)^m)$. 

Our second main result gives a Lorentz refinement of Oberlin's estimate \eqref{Oberlin88} in
the interior of this admissible range; see Theorem~\ref{thm:full-kernel}.

As the estimates considered here measure the size of $K$ or $g$, we may as well regard these operators as $(m+1)$-linear mappings in the input functions and the kernel.
Moreover,
\[
\big|T_g(f_1,\ldots,f_m)\big|
\leq
T_{|g|}(|f_1|,\ldots,|f_m|),
\]
and
\[
\big|T^K(f_1,\ldots,f_m)\big|
\leq
T^{|K|}(|f_1|,\ldots,|f_m|).
\]
We therefore use without further mention the standard reduction to nonnegative functions and kernels.
Note that for nonnegative functions the integrals in \eqref{eq:one_parameter_operator} and 
\eqref{eq:full_kernel_operator} are well defined with values in $[0,\infty]$.

Both models fit into a common measure-theoretic framework. If $\mu$ is a 
(nonnegative) regular Borel measure on $(\R^n)^m$, we consider the
translation-invariant $m$-linear operator
\[
T^\mu(f_1,\ldots,f_m)(x)
:=
\int_{(\R^n)^m}
\Big(\prod_{i=1}^m f_i(x-y_i)\Big)
\,\rd\mu(y_1,\ldots,y_m),
\]
defined for nonnegative measurable functions $f_i$. 
Operators of this form have been studied in several settings. The bilinear case with positive kernels was considered in~\cite{grafakos2010positive}, while restricted convolution inequalities and related multilinear operators associated with measures were studied in~\cite{geba2013restricted}.

The full-dimensional operator~\eqref{eq:full_kernel_operator} corresponds to the absolutely continuous measure $\mu_K$ defined by
\[
\rd\mu_K(y_1,\ldots,y_m)
=
K(y_1,\ldots,y_m)\,\rd y_1\cdots\rd y_m.
\]
Thus
\[
T^K=T^{\mu_K}.
\]

For the one-parameter operator~\eqref{eq:one_parameter_operator}, define the
$n$-dimensional subspace
\[
\Gamma
:=
\big\{(\alpha_1y,\ldots,\alpha_my):\, y\in\R^n\big\}
\subseteq
(\R^n)^m,
\]
and let $\mu_g$ be the pushforward of the measure $g(y)\,\rd y$ under the map
\[
y\mapsto(\alpha_1y,\ldots,\alpha_my).
\]
Then $\mu_g$ is supported on $\Gamma$, and
\[
T_g=T^{\mu_g}.
\]

Thus the full-dimensional model corresponds to absolutely continuous kernel
measures, whereas the one-parameter model corresponds to kernel measures
supported on the lower-dimensional subspace $\Gamma$. The different dimensions of the two
kernel domains lead to the different scaling relations appearing in the main
theorems.

More generally, one may consider kernel measures obtained by pushing forward
functions on $\R^{kn}$, where $1\leq k\leq m$. This amounts to choosing linear
maps
\[
L_i:\R^{kn}\to\R^n,
\qquad i=1,\ldots,m,
\]
and considering operators of the form
\[
T_h^L(f_1,\ldots,f_m)(x)
:=
\int_{\R^{kn}}
\prod_{i=1}^m f_i(x-L_i u)\,h(u)\,\rd u,
\]
where $L=(L_1,\ldots,L_m)$. The corresponding kernel measure is supported on
the image of the map
\[
u\mapsto(L_1u,\ldots,L_mu)
\]
from $\R^{kn}$ into $(\R^n)^m$. The full-dimensional operator corresponds to $k=m$, which is seen by writing $u=(y_i)_{i=1}^m$ and taking $L_i u=y_i$. The one-parameter operator
corresponds to $k=1$, with $u=y$ and $L_i u=\alpha_i y$. Unlike the two cases treated here, the intermediate range does not single out a canonical choice of these maps. We therefore leave its systematic study for future work.

\section{Statement of main results}

Guliyev and Nazirova~\cite{guliyev2008oneil} obtained rearrangement inequalities and Lebesgue estimates for multilinear convolutions of the form~\eqref{eq:one_parameter_operator}, while Guliyev, Ekincioglu, and Nazirova~\cite{guliyev2015lorentz} established Lorentz estimates for kernels
in $L^{r,\infty}(\R^n)$. Within the common range of primary exponents, their
result is recovered from the case $s=\infty$ below, together with the standard Lorentz embedding. For finite $s$, our theorem transfers the
additional Lorentz summability of the kernel to the target, yielding a smaller secondary index and hence a stronger target Lorentz space. 
Our first main result concerns the one-parameter model and is as follows:

\begin{theorem}\label{thm:one-parameter}
Let $\alpha_1,\ldots,\alpha_m$ be nonzero pairwise distinct real numbers. Let $1<p_1,\ldots,p_m,r<\infty$ satisfy
\begin{equation}\label{eq:open-banach-condition}
1
<
\sum_{i=1}^m \frac1{p_i}
+
\frac1r
<
2,
\end{equation}
and define $1<p<\infty$ by
\begin{equation}\label{eq:p-definition}
\frac1p=
\sum_{i=1}^m \frac1{p_i}
+
\frac1r
-1.
\end{equation}
Let $0<q_1,\ldots,q_m,s\leq\infty$, and define $0<q\leq\infty$ by
\begin{equation}\label{eq:q-definition}
\frac1q=
\sum_{i=1}^m \frac1{q_i}
+
\frac1s.
\end{equation}
Then there exists a positive constant 
\[C=C(n,m,\alpha_1,\ldots,\alpha_m,p_1,\ldots,p_m,r,q_1,\ldots,q_m,s)\]
 such that
\begin{equation}\label{eq:banach-lorentz}
\big\|T_g(f_1,\ldots,f_m)\big\|_{p,q}
\leq
C 
\Big(\prod_{i=1}^m \|f_i\|_{p_i,q_i}\Big) 
\|g\|_{r,s}
\end{equation}
for all $f_i\in L^{p_i,q_i}(\R^n)$, $i=1,\ldots,m$, and all $g\in L^{r,s}(\R^n)$.
\end{theorem}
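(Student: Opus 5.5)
Our approach is multilinear real interpolation applied to the $(m+1)$-linear map $(f_1,\ldots,f_m,g)\mapsto T_g(f_1,\ldots,f_m)$. The open conditions \eqref{eq:open-banach-condition} and $1<p_i,r<\infty$ leave room to perturb all $m+1$ exponents independently. We will use a multilinear Marcinkiewicz-type interpolation theorem in the form of Janson or Grafakos--Kalton, or a Christ-type restricted weak-type version. Such a theorem upgrades restricted weak-type bounds at finitely many nearby points into strong Lorentz bounds
\[
L^{p_1,q_1}\times\cdots\times L^{p_m,q_m}\times L^{r,s}\to L^{p,q}.
\]
Here $1/q=\sum 1/q_i+1/s$, provided the vertices surround $(1/p_1,\ldots,1/p_m,1/r)$ in an open set of $\R^{m+1}$ and the target exponent is an affine function of the source exponents. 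Condition \eqref{eq:p-definition} supplies this affine relation.

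\textbf{Step 1 (endpoint estimates).} In a neighbourhood of $(1/p_1,\ldots,1/p_m,1/r)$ we prove strong, or at least restricted weak-type, bounds
\[
\|T_g(f_1,\ldots,f_m)\|_{\tilde p}\lesssim \prod_i\|f_i\|_{\tilde p_i}\,\|g\|_{\tilde r},
\qquad \frac1{\tilde p}=\sum\frac1{\tilde p_i}+\frac1{\tilde r}-1,
\]
whenever $1<\tilde p_i,\tilde r<\infty$ and $1<\sum 1/\tilde p_i+1/\tilde r<2$. This follows from the Guliyev--Nazirova theorem quoted in the introduction together with $\|g\|_{\tilde r,\infty}\le\|g\|_{\tilde r}$. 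An alternative self-contained route applies Hölder in $y$ to split $\prod_i f_i(x-\alpha_i y)$. Then Minkowski's integral inequality and the change of variables $z=\alpha_i y$ reduce the bound to linear Young's inequality. This uses that $\alpha_i\neq0$; distinctness of the $\alpha_i$ is not needed at this stage.

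\textbf{Step 2 (interpolation).} The quantity $1/p$ is affine in $(1/p_i,1/r)$ with coefficient $1$ in each variable. We therefore choose $m+2$ vertices of a small simplex around the given point, all inside the admissible open region. The multilinear real interpolation theorem, applied with secondary indices $q_i$ and $s$, then yields \eqref{eq:banach-lorentz}. The index $q$ comes out as $1/q=\sum 1/q_i+1/s$, exactly as in the bilinear O'Neil inequality. We use the positivity reduction so that quasi-Banach issues for $q_i<1$ cause no trouble. This is legitimate because the relevant interpolation theorem (Grafakos--Kalton / Janson) allows arbitrary $0<q_i\le\infty$ for positive multisublinear operators.

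\textbf{Main obstacle.} The main difficulty is the interpolation step. We must check that the version of multilinear Marcinkiewicz interpolation we invoke genuinely gives the sharp relation $1/q=\sum 1/q_i+1/s$. Many versions only give $1/q\le\sum 1/q_i+1/s$, or require $q\ge$ some threshold. We must also confirm the hypothesis that the target exponent map is non-degenerate, meaning $1/p$ varies in every coordinate direction. If the off-the-shelf result is insufficient, we would instead prove the estimate directly. The plan would be to decompose each $f_i$ and $g$ into dyadic level pieces $f_i=\sum_k f_{i,k}$ with $|f_{i,k}|\approx 2^k$ on sets of measure $\approx$ the distribution function. We would then apply the Step 1 bounds at several nearby exponents to each tuple of pieces. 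The resulting geometric-decay multi-sum would be summed by a discrete Hölder/Young inequality in the $\ell^{q_i}$, $\ell^s$ norms; this summation is where the exact exponent $q$ arises.
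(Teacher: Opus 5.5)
Your main argument is the paper's proof. The Guliyev--Nazirova weak-kernel theorem gives restricted weak-type bounds at exponent tuples surrounding $(1/p_1,\ldots,1/p_m,1/r)$ inside the open admissible region. Multilinear Marcinkiewicz interpolation, with $g$ treated as an extra input, then gives the Lorentz bound with $1/q=\sum_i 1/q_i+1/s$. The paper uses the $2^{m+1}$ vertices of a small cube rather than a simplex, which makes no difference.

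One caveat concerns your ``self-contained'' H\"older--Minkowski--Young alternative for Step~1. It cannot cover the part of the range where $\sum_i 1/p_i>1$, which lies inside the theorem's hypotheses (e.g.\ $m=2$, $1/p_1=1/p_2=0.6$, $1/r=0.5$). An argument that never uses $\alpha_i\neq\alpha_j$ would also apply to $\alpha_1=\cdots=\alpha_m=\alpha$. In that case $T_g(f_1,\ldots,f_m)=(\prod_i f_i)*g_\alpha$, with $g_\alpha(z)=|\alpha|^{-n}g(z/\alpha)$. Taking $g=\chi_{B(0,1)}$ and suitable $f_i\in L^{p_i}$ with $\prod_i f_i\notin L^1_{\mathrm{loc}}$, this is infinite on a set of positive measure. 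So in that range you genuinely need the Guliyev--Nazirova theorem, where distinctness of the $\alpha_i$ enters.
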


The theorem of Guliyev, Ekincioglu, and
Nazirova~\cite{guliyev2015lorentz} also reaches the weak-kernel endpoint $p=1$, under the corresponding condition on the input Lorentz indices, whereas Theorem~\ref{thm:one-parameter} is restricted to $1<p<\infty$. Thus the two results are complementary: their theorem treats this endpoint
for kernels in $L^{r,\infty}(\R^n)$, while
Theorem~\ref{thm:one-parameter} captures the effect of a finite kernel Lorentz index in the open range.

Our second main result concerns the full-dimensional model. It gives a Lorentz refinement of Oberlin's multilinear Young inequality in the interior of its
admissible range.

\begin{theorem}\label{thm:full-kernel}
Let $1<p_1,\ldots,p_m,r,p<\infty$ satisfy
\begin{equation}\label{eq:full-kernel-lorentz-scaling}
m+\frac1p
=
\sum_{i=1}^m\frac1{p_i}
+
\frac mr.
\end{equation}
Assume also that
\begin{equation}\label{eq:full-kernel-lorentz-interior}
\frac1p<\frac1r \qquad \text{and} \qquad
1-\frac1r<\frac1{p_i},
\quad
i=1,\ldots,m.
\end{equation}
Let $0<q_1,\ldots,q_m,s\leq\infty$, and define $0<q\leq\infty$ by
\begin{equation}\label{eq:full-kernel-lorentz-q}
\frac1q
=
\sum_{i=1}^m\frac1{q_i}
+
\frac1s.
\end{equation}
Then there exists a positive constant \[C=C(n,m,p_1,\ldots,p_m,r,q_1,\ldots,q_m,s)\] such that
\begin{equation}\label{eq:full-kernel-lorentz}
\big\|T^K(f_1,\ldots,f_m)\big\|_{p,q}
\leq
C
\Big(\prod_{i=1}^m\|f_i\|_{p_i,q_i}\Big)
\|K\|_{r,s}
\end{equation}
for all $f_i\in L^{p_i,q_i}(\R^n)$, $i=1,\ldots,m$, and all
$K\in L^{r,s}((\R^n)^m)$.
\end{theorem}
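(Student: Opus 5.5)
Theorem~\ref{thm:full-kernel} will follow from Oberlin's strong-type estimate \eqref{Oberlin88} by multilinear real interpolation, treating $T^K$ as an $(m+1)$-linear operator $(f_1,\ldots,f_m,K)\mapsto T^K(f_1,\ldots,f_m)$. The tool is the multilinear Marcinkiewicz interpolation theorem in Lorentz spaces (Janson, or Christ / Grafakos--Kalton / Grafakos--Mastyło): if an $(m+1)$-linear operator maps $L^{P_1}\times\cdots\times L^{P_{m+1}}\to L^{P}$, or even only restricted-weak type $L^{P_1,1}\times\cdots\times L^{P_{m+1},1}\to L^{P,\infty}$, at a family of points $(1/P_1,\ldots,1/P_{m+1})$ whose convex hull contains an $(m+1)$-dimensional neighbourhood of the target point in the hyperplane of admissible exponents, then at that target point it is bounded $L^{p_1,q_1}\times\cdots\times L^{p_{m+1},q_{m+1}}\to L^{p,q}$ whenever $1/q=\sum 1/q_j$. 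This conclusion is exactly \eqref{eq:full-kernel-lorentz-q} with $q_{m+1}=s$.

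First, I will parametrize the admissible set. The free variables are $x_i=1/p_i$ and $y=1/r$, and $1/p$ is determined by \eqref{eq:full-kernel-lorentz-scaling}. This gives an affine map from $\R^{m+1}$, so the target space of the interpolation theorem is one-dimensional, which is what the theorem requires (the target index is a function of the inputs, and the relevant lower-dimensional faces collapse). The set $\mathcal O$ of $(x_1,\ldots,x_m,y)\in(0,1)^{m+1}$ satisfying \eqref{eq:full-kernel-lorentz-scaling} with $1/p\in(0,1)$, together with the strict inequalities \eqref{eq:full-kernel-lorentz-interior}, is an open subset of $\R^{m+1}$, because each condition is a strict linear inequality. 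Oberlin's inequality \eqref{Oberlin88} holds on its closure, and in particular at every point of $\mathcal O$.

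Given the target point $\theta\in\mathcal O$, I will choose a small simplex of $m+2$ points in $\mathcal O$ containing $\theta$ in its interior, for instance $\theta\pm\delta e_j$ suitably arranged. At each vertex, \eqref{Oberlin88} gives strong type, hence restricted weak type. The multilinear interpolation theorem, in the form for $(m+1)$-linear operators with the $\theta$ interior to the convex hull of finitely many boundedness points (Grafakos--Kalton / Janson), then yields \eqref{eq:full-kernel-lorentz} for all $0<q_1,\ldots,q_m,s\le\infty$ with $q$ given by \eqref{eq:full-kernel-lorentz-q}. The requirement $1<p<\infty$ ensures that the target $L^{p,q}$ is normable when $q\ge1$, and the quasi-Banach versions of the theorem cover $q<1$.

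The main obstacle is checking the precise hypotheses of the interpolation theorem. These are nondegeneracy of the simplex in the $(m+1)$-dimensional parameter space, and the fact that the target exponents $1/p$ at the vertices vary affinely and do not collide in a way that forbids the $\ell^{q}$ summation. If the available abstract theorem only allows interpolation in one variable at a time, I will fall back on an iterative scheme. Fixing all but one input and using O'Neil-type linear interpolation (bilinear Marcinkiewicz in $f_j$ and the output) loses $1/q\le\sum 1/q_j$ control at each step. In that case I will instead prove the estimate for characteristic functions and then sum dyadic level-set pieces of each input directly, using the strict openness of $\mathcal O$ to obtain geometric decay in the multi-index sum.
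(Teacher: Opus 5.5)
Your proposal is correct and follows essentially the paper's argument: treat $K$ as an $(m+1)$-th input, obtain restricted weak-type estimates from Oberlin's inequality at the vertices of a small polytope around $(1/p_1,\ldots,1/p_m,1/r)$ inside the open region cut out by $0<1/p<1$ and the strict conditions \eqref{eq:full-kernel-lorentz-interior}, then apply multilinear Marcinkiewicz interpolation to get $1/q=\sum_i 1/q_i+1/s$. The only difference is cosmetic: the paper uses the box with the $2^{m+1}$ vertices $(a_1\pm\delta,\ldots,a_m\pm\delta,b\pm\delta)$ (Lemma~\ref{lem:full-dimensional-local-box}) instead of a simplex; if you cite the simplex version of the interpolation theorem, note that the points $\theta\pm\delta e_j$ contain no simplex with $\theta$ in its interior, so you should choose a genuinely nondegenerate simplex, which openness of $\mathcal O$ allows.
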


We note that, if $q_0$ is defined by
\[
\frac1{q_0}
=
\sum_{i=1}^m\frac1{q_i}
+
\frac1s,
\]
then, by the standard inclusion
\[
L^{p,q_0}(\R^n)\subseteq L^{p,q}(\R^n),
\qquad
q_0\leq q,
\]
the conclusions of both theorems remain valid under the weaker condition
\[
\frac1q
\leq
\sum_{i=1}^m\frac1{q_i}
+
\frac1s.
\]
In this case, the constant may also depend on $q$.

The proofs of Theorems~\ref{thm:one-parameter} and
\ref{thm:full-kernel} are based on an interpolation argument, which proceeds in two steps. First, 
it employs restricted weak-type estimates at the vertices of a local box around the target exponent tuple. For the one-parameter model, these estimates follow from the weak-kernel theorem of Guliyev and Nazirova~\cite[Theorem~4.1]{guliyev2008oneil}; for the full-dimensional model, they follow from Oberlin's multilinear Young inequality~\cite{oberlin1988multilinear}. Viewing the kernel as an additional input, we then apply multilinear Marcinkiewicz interpolation to obtain the stated Lorentz bounds; see Proposition~\ref{prop:lorentz-interpolation}. For
the underlying interpolation theory, see, among others,~\cite{sharpley1977multilinear,grafakos2001remarks,
carro2004restricted,grafakos2012multilinear}.

We next turn to limitations of the full-dimensional theory. In the multilinear case $m\geq2$, at the endpoint $p=r=1$, the strong
$L^1$ estimate for integrable kernels does not admit a weak-$L^1$ counterpart with both the kernel and the target in $L^{1,\infty}$;
see Proposition~\ref{prop:full-kernel-weak-L1-failure}. A more structural obstruction arises from translation invariance itself. H\"ormander proved that a nonzero translation-invariant linear
operator bounded from $L^{p_1}$ to $L^p$ must satisfy $p_1 \leq p$; see~\cite[Theorem~1.1]{hormander1960estimates}. A bilinear analogue appears in the work of Rodr\'iguez-L\'opez~\cite{rodriguezlopez2013homomorphism},
and the corresponding multilinear statement in Lebesgue spaces is also
known; see~\cite{grafakos2014modern}.

Our third main result extends this obstruction to the multilinear Lorentz setting for nonnegative full-dimensional kernels. In the following result, we recover the necessary condition on the primary exponents and, in the critical case, obtain an additional necessary condition
on the secondary Lorentz indices.

\begin{theorem}\label{thm:lorentz-zero-kernel}
Let $0<p_1,\ldots,p_m,p<\infty$
and $0<q_1,\ldots,q_m,q\leq\infty$.
Let $K$ be a nonnegative measurable function on $(\R^n)^m$ that is nonzero on a set of positive measure. If there exists a positive constant $C_K$ such that
\begin{equation}\label{eq:lorentz-zero-kernel-boundedness}
\big\|
T^K(f_1,\ldots,f_m)
\big\|_{p,q}
\leq
C_K
\prod_{i=1}^m
\|f_i\|_{p_i,q_i}
\end{equation}
for all $f_i\in L^{p_i,q_i}(\R^n)$, $i=1,\ldots,m$, then we must have
\begin{equation}\label{eq:primary-zero-kernel-condition}
\frac1p
\leq
\sum_{i=1}^m\frac1{p_i}.
\end{equation}
Moreover, if equality holds in~\eqref{eq:primary-zero-kernel-condition}, then necessarily
\begin{equation}\label{eq:secondary-zero-kernel-condition}
\frac1q
\leq
\sum_{i=1}^m\frac1{q_i}.
\end{equation}
\end{theorem}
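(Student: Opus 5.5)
The plan is Hörmander's translation argument, adapted to nonnegative kernels and made quantitative in Lorentz quasi-norms. First, since $K\geq0$ is nonzero on a set of positive measure, there are cubes $Q_1,\ldots,Q_m\subseteq\R^n$ with
\[
\int_{Q_1\times\cdots\times Q_m}K>0 .
\]
Fix nonnegative bounded compactly supported functions $\phi_1,\ldots,\phi_m$. Set $F:=T^K(\phi_1,\ldots,\phi_m)$; then $F\geq0$ and $F\not\equiv0$. To see this, take $\phi_i=\chi_{B_i}$ with $B_i$ large balls. For $x$ in a small ball, the set $\{y: x-y_i\in B_i \text{ for all } i\}$ then contains $Q_1\times\cdots\times Q_m$. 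By the hypothesis~\eqref{eq:lorentz-zero-kernel-boundedness}, $F\in L^{p,q}$ and in particular $F$ is finite a.e. Truncating $K$ to $K\chi_{\{K\le M\}\cap B(0,R)}\le K$ and using positivity, we may also replace $F$ by a bounded compactly supported minorant $G\le F$ with $G\not\equiv0$.

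Next we translate. Fix $N\in\N$ and points $a_1,\ldots,a_N\in\R^n$ that are very far apart. Let $f_i^{(N)}:=\sum_{j=1}^N\phi_i(\cdot-a_j)$. Expanding multilinearly and keeping only the diagonal terms (all $i$ using the same shift $a_j$), positivity gives
\[
T^K(f_1^{(N)},\ldots,f_m^{(N)})\geq\sum_{j=1}^N F(\cdot-a_j)\geq\sum_{j=1}^N G(\cdot-a_j).
\]
If the shifts are far enough apart relative to the support of $G$, the right-hand side is a sum of $N$ disjointly supported translates. On the input side the $\phi_i(\cdot-a_j)$ are also disjointly supported, so $(f_i^{(N)})^*$ is the rearrangement of $N$ disjoint copies. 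We need two computations. For disjoint translates of a fixed function $h$, the distribution function is multiplied by $N$, so $(\sum_j h(\cdot-a_j))^*(t)=h^*(t/N)$. Hence $\|\sum_{j}h(\cdot-a_j)\|_{p,q}=N^{1/p}\|h\|_{p,q}$ exactly, for every $q$. Plugging this into~\eqref{eq:lorentz-zero-kernel-boundedness} gives
\[
N^{1/p}\|G\|_{p,q}\le C_K N^{\sum_i 1/p_i}\prod_i\|\phi_i\|_{p_i,q_i}.
\]
Letting $N\to\infty$ yields~\eqref{eq:primary-zero-kernel-condition}.

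The critical case is the main obstacle. Here the pure-count argument is scale-neutral, so the translated copies must be given varying heights or sizes to probe the secondary indices. The plan is to use the dilation structure of Lorentz norms on sums of disjoint pieces with geometric heights. Take coefficients $c_{i,j}>0$ and set
\[
f_i=\sum_{j=1}^N c_{i,j}\,\phi_i(\cdot-a_j),
\]
with $\phi_i=\chi_{B}$ indicator functions. The diagonal terms then give the lower bound $\sum_j\big(\prod_i c_{i,j}\big)G(\cdot-a_j)$. I would choose $c_{i,j}=2^{j/p_i}\,\lambda_{j}^{(i)}$ and apply a pointwise multiplication by disjoint level sets. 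Alternatively, and more simply, take $G$ also an indicator-type function bounded below by $\delta\chi_{B'}$. One can then reduce everything to the functional
\[
\Big\|\sum_j d_j\chi_{E_j}\Big\|_{p,q}\approx\Big\|\big(d_j|E_j|^{1/p}\big)_j\Big\|_{\ell^q},
\]
valid when the $|E_j|$ grow geometrically with ratio depending on the heights $d_j$ decreasing geometrically. So I plan to use pieces of dyadic measures $2^j$ (dilated balls, $\phi_i=\chi_{B(0,2^{j/n})}$ translated apart) rather than identical copies. Then $T^K$ of the $j$-th block is bounded below by roughly $2^{j}$-scale averages. Making this work requires a lower bound for $T^K(\chi_{B_\rho},\ldots,\chi_{B_\rho})$ on $B_{\rho}$ of size $\gtrsim\int_{Q}K$ for large $\rho$, which holds uniformly in $\rho\geq\rho_0$.

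With $d_{i,j}=2^{-j/p_i}b_{i,j}$ and $\sum_i 1/p_i=1/p$, the inequality becomes
\[
\big\|\big(\textstyle\prod_i b_{i,j}\big)_j\big\|_{\ell^q}\lesssim\prod_i\|(b_{i,j})_j\|_{\ell^{q_i}}.
\]
Choosing $b_{i,j}=j^{-\gamma_i}$, with exponents tuned so that each right-hand factor is finite but the left diverges, forces~\eqref{eq:secondary-zero-kernel-condition}. This is the discrete Hölder sharpness. The delicate point, which I expect to be hardest, is verifying the two-sided Lorentz norm equivalence for such geometric-block sums, uniformly in $N$, for all $0<q\le\infty$.
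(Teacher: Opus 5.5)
Your argument for \eqref{eq:primary-zero-kernel-condition} is the paper's: truncate $K$, take a bounded compactly supported nonzero minorant of $T^K$ at a fixed tuple (the paper uses $c_0\chi_{Q'}$ from $T^{K_0}(\chi_Q,\ldots,\chi_Q)$), translate $N$ copies far apart, keep the diagonal terms, and use $\|\sum_j h(\cdot-a_j)\|_{p,q}=N^{1/p}\|h\|_{p,q}$.

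In the critical case you use a genuinely different test family, and the route is sound. The paper keeps all blocks identical and puts the information only in the heights $j^{-1/p_i}$. The output then dominates $c_0\sum_{j\le N}j^{-1/p}\chi_{Q'+x_j}$, whose rearrangement is explicitly $j^{-1/p}$ on consecutive intervals of length $|Q'|$. One then compares $(\log(N+1))^{1/q}$ with $(\log(N+1))^{\sum_i 1/q_i}$.

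You instead use blocks of measure $\approx 2^j$ with heights $2^{-j/p_i}b_{i,j}$. This turns the hypothesis into the discrete H\"older inequality $\|(\prod_i b_{i,j})_j\|_{\ell^q}\lesssim\prod_i\|(b_{i,j})_j\|_{\ell^{q_i}}$ for arbitrary finite sequences, which shows why the secondary condition is exactly H\"older's. The price is two extra ingredients.

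First, you need a lower bound valid across scales. With $K_0$ supported in $B(0,R)$ one has $T^{K_0}(\chi_{B_\rho},\ldots,\chi_{B_\rho})\geq c_0$ on $B_{\rho-R}$, not on all of $B_\rho$ as you wrote. This set has comparable measure once $\rho\geq 2R$, so only blocks with $j\geq j_0$ should be used.

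Second, you need the norm equivalence you flag as hardest, which is in fact routine. Take disjoint $E_j$ with $c2^j\le|E_j|\le C2^j$ and arbitrary heights $e_j\ge0$. Write $\|f\|_{p,q}^q=p\int_0^\infty(\lambda d_f(\lambda)^{1/p})^q\frac{\rd\lambda}{\lambda}$ and set $J(\lambda)=\max\{j:e_j>\lambda\}$. Then $d_f(\lambda)$ and $(\sum_{j:e_j>\lambda}2^{jq/p})^{p/q}$ are both comparable to $2^{J(\lambda)}$. Integrating in $\lambda$ gives $\|f\|_{p,q}\approx\|(e_j2^{j/p})_j\|_{\ell^q}$, with constants independent of $N$ and of the heights; the case $q=\infty$ is the same with a supremum.

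With these in hand, the choice $b_{i,j}=j^{-1/q_i}$ for $j_0\le j\le N$ finishes the argument. It gives a positive power of $N$ on the left against $(\log N)^{\sum_i1/q_i}$ on the right whenever $1/q>\sum_i 1/q_i$.

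The paper's route is more economical: one fixed block, one lower bound, one explicit rearrangement. Yours is more structural, since it shows that the critical-case estimate contains the full discrete H\"older inequality.
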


Equivalently, for a nonnegative measurable kernel $K$, boundedness
as in~\eqref{eq:lorentz-zero-kernel-boundedness} forces $K=0$ almost
everywhere whenever either
\[
\sum_{i=1}^m\frac1{p_i}<\frac1p,
\]
or
\[
\sum_{i=1}^m\frac1{p_i}=\frac1p
\qquad\text{and}\qquad
\sum_{i=1}^m\frac1{q_i}<\frac1q.
\]

Although conclusions of the type \eqref{eq:primary-zero-kernel-condition}   were first obtained by H\"ormander~\cite{hormander1960estimates} for non-necessarily positive kernels, in this paper we focus on  positive operators and for this reason we work with   positive kernels. Our extension of H\"ormander's theorem to Lorentz spaces   presented here is 
based on an approach that is different from those in 
~\cite{hormander1960estimates} and in ~\cite{rodriguezlopez2013homomorphism}.

We now return to the one-parameter model, where
Theorem~\ref{thm:one-parameter} concerns target exponents $p>1$. Our fourth main result shows that a natural scale-invariant family of restricted weak-type estimates fails throughout the range $p<1$, uniformly over measurable kernel sets and already in the one-dimensional bilinear case.
The proof uses an additive-combinatorial construction in which an imbalance
between sumset and difference-set cardinalities is amplified by passing to Cartesian powers and then transferred to measurable sets.

To state the result, we specialize~\eqref{eq:one_parameter_operator} to
$n=1$, $m=2$, $\alpha_1=-1$, and $\alpha_2=1$, and retain the notation
$T_g$:
\[
T_g(f_1,f_2)(x)
=
\int_{\R}
f_1(x+y) \, f_2(x-y) \, g(y)\,\rd y.
\]

\begin{theorem}\label{thm:counterexamples}
Let $\beta > 0$. There is no constant $C>0$ such that
\begin{equation}\label{eq:counterexamples}
\big\|
T_{\chi_A}(\chi_{E_1},\chi_{E_2})
\big\|_{1/(1+\beta),\infty}
\leq
C \, |E_1|\,|E_2|\,|A|^\beta
\end{equation}
for all measurable sets $E_1,E_2,A\subseteq\R$ of finite measure. 
\end{theorem}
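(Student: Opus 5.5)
The plan is to build discrete counterexamples in $\Z^d$ from a finite set with many more sums than differences, amplify the imbalance by taking Cartesian powers, and then transfer to measurable subsets of $\R$. First I check scaling: replacing $E_1,E_2,A$ by $\lambda E_1,\lambda E_2,\lambda A$ multiplies both sides of \eqref{eq:counterexamples} by $\lambda^{2+\beta}$. So the failure cannot come from dilation and must be combinatorial.

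\emph{Discrete model.} For finite $S\subseteq\Z^d$ and $A\subseteq\Z^d$, consider
\[
N(x)=\#\{y\in A:\ x+y\in S,\ x-y\in S\}.
\]
Pairs $(a,b)\in S\times S$ correspond to $x=(a+b)/2$ and $y=(a-b)/2$. If $A$ contains all the relevant half-differences, then $\sum_x N(x)=|S|^2$, and $N$ is supported in $\tfrac12(S+S)$. Using the bound $t\,|\{N\ge t\}|^{1+\beta}$ with $t$ of the size of the average value, the analogue of the left side of \eqref{eq:counterexamples} is roughly $|S|^2\,|\{N\ge t\}|^{\beta}$. The right side is $|S|^2|A|^{\beta}$. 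So I need the level set of $N$ to be much larger than $A$, i.e. ``more typical sums than typical differences.''

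A crude count with all of $S-S$ in $A$ only gives a ratio $\le1$, since $|S+S|,|S-S|\ge|S|$. The right quantity is therefore entropic. Let $a,b$ be independent and uniform on a fixed finite $S_0\subseteq\Z$. I need
\[
H(a+b)>H(a-b).
\]
I would pick an explicit MSTD-type set $S_0$ (e.g.\ $\{0,2,3,4,7,11,12,14\}$) and verify this inequality by direct computation. If that set fails, I would search small sets; this is the one place a concrete check is required.

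\emph{Amplification.} Take $S=S_0^d$. For $(a,b)$ uniform on $S\times S$, the sum $a+b$ and difference $a-b$ are sums of $d$ i.i.d.\ coordinates, so the law of large numbers / asymptotic equipartition gives the following. Let $A$ be the typical set of $(a-b)/2$, of size $e^{d(H(a-b)+o(1))}$. Then for most pairs $(a,b)$, the point $y=(a-b)/2$ lies in $A$, and $x=(a+b)/2$ lies in the typical set $\Omega$ of size $e^{d(H(a+b)+o(1))}$. On $\Omega$ each such $x$ receives about $|S|^2e^{-dH(a+b)}$ pairs, up to $e^{o(d)}$ factors.

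Put $t=|S|^2e^{-d(H(a+b)+o(1))}$. A pigeonhole argument, discarding atypical $x$, shows $|\{N\ge t\}|\ge e^{d(H(a+b)-o(1))}$. Hence
\[
\frac{t\,|\{N\ge t\}|^{1+\beta}}{|S|^2|A|^\beta}\ge e^{d\beta(H(a+b)-H(a-b)-o(1))}\to\infty .
\]
Making this concentration step rigorous with explicit $o(d)$ errors is the main technical obstacle. I would handle it by Chebyshev's inequality applied to the empirical log-probabilities of the two i.i.d.\ sums.

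\emph{Transfer to $\R$.} Map $\Z^d$ into $\Z$ by $v\mapsto\sum_j v_jM^{j}$ with $M$ large. This map is injective on the bounded sets involved and preserves the relations $x\pm y=a$; that is, it is a Freiman isomorphism of order $2$ on the relevant boxes. Then replace each integer point $z$ by the interval $[z,z+\delta]$ for a small $\delta$, and take $E_1=E_2$ and $A$ to be the resulting unions of intervals (with the factor $\tfrac12$ absorbed by working with $2\Z$).

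For $x$ within $\delta/4$ of a lattice point $x_0$, the integral $T_{\chi_A}(\chi_{E_1},\chi_{E_2})(x)$ is at least $c\,\delta N(x_0)$. Therefore $\{T\ge c\delta t\}$ has measure $\gtrsim\delta|\{N\ge t\}|$. Both sides of \eqref{eq:counterexamples} pick up the same power $\delta^{2+\beta}$, consistent with the scaling check. So the discrete ratio transfers up to constants, and letting $d\to\infty$ shows that no constant $C$ can satisfy \eqref{eq:counterexamples}.
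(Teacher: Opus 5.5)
Your argument is correct, and the one input you left unchecked does hold for the set you name, though narrowly. For $S_0=\{0,2,3,4,7,11,12,14\}$, the ordered representation counts of $a+b$ are $1,2,3,4,7$ with multiplicities $5,13,2,5,1$, and those of $a-b$ are $1,2,3,8$ with multiplicities $2,12,10,1$. Hence
\[
64\bigl(H(a+b)-H(a-b)\bigr)=\log\frac{3^{24}}{2^{18}\,7^{7}}>0,
\]
about $0.006$ bits. The check is genuinely delicate: the collision entropies of $a+b$ and $a-b$ always coincide, since both are given by the additive energy of $S_0$, so only the Shannon entropy can separate them. Write $H_\pm:=H(a\pm b)$. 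Your pigeonhole step then goes through if you take the sum-typical set two-sided. Each $s$ in it receives at most $|S|^2e^{-d(H_+-\epsilon)}$ pairs, and its cardinality, at most $e^{d(H_++\epsilon)}$, controls the mass lost below the threshold $t$. One cosmetic point: with one-sided intervals $[z,z+\delta]$, points $x$ just to the left of $x_0$ receive nothing, so use centered intervals.

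The paper's route is much lighter because it does not take $E_1=E_2$.
\begin{itemize}
\item It puts $E_1$ around $P$ and $E_2$ around $Q=-P$, with $P=\{2^0,\dots,2^{N-1}\}$. The sums $P+Q=P-P$ are then essentially all distinct ($N^2-N+1$ of them), while the half-differences come from $P-Q=P+P$, which collapses by a factor $2$ because $a+b=b+a$.
\item The crude count you discard (one representation per sum, $A$ equal to all half-differences) therefore already gives the base ratio $|P+Q|^{1+\beta}/(|P|\,|Q|\,|P-Q|^\beta)\to 2^\beta>1$ for every $\beta>0$.
\item Cartesian powers and an embedding by the $\Q$-independent numbers $\pi^j$ then finish exactly as in your transfer step.
\end{itemize}
Your dismissal of the crude count is valid only because $E_1=E_2$ moves the commutativity collapse to the sum side. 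That bounds the base by $\frac{|S|+1}{2|S|}\bigl(|S+S|/|S-S|\bigr)^\beta$, which is less than $1$ for fixed $S$ and small $\beta$. The reason you give, $|S\pm S|\ge|S|$, does not yield this.

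In short, the paper's proof is elementary and exact. Yours needs an MSTD set, an entropy computation and asymptotic equipartition, but in exchange it shows that the failure persists even for identical inputs $E_1=E_2$.
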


Thus, for every $\beta>0$, the associated trilinear mapping is not of
restricted weak type
\[
\left(1,1,\frac1\beta;\frac1{1+\beta}\right).
\]
In particular, taking $\beta=1$ shows that
\[
\big\|
T_{\chi_A}(\chi_{E_1},\chi_{E_2})
\big\|_{1/2,\infty}
\leq
C \, |E_1|\,|E_2|\,|A|
\]
cannot hold with a constant independent of the measurable sets
$E_1,E_2,A\subseteq\R$ of finite measure. Related questions for
one-parameter trilinear operators were studied by
Christ~\cite{christ2001trilinear}.

This uniform failure over arbitrary kernel sets does not preclude estimates for fixed kernels or for classes with additional structure. In the final part of the paper, we prove an $L^1\times L^1\to L^{1/2}$ estimate for cube kernels and obtain estimates, including some with target exponent $p < 1$, for
radially decreasing kernels.

\smallskip
\subsection*{Outline}

Section~\ref{sec:restricted-weak} establishes the restricted weak-type
estimates used in the proofs of the two Lorentz bounds. 
The section concludes with the multilinear
interpolation proposition that converts a local family of such estimates into
a Lorentz bound.

Sections~\ref{sec:one-parameter} and~\ref{sec:full-dimensional} prove
Theorems~\ref{thm:one-parameter} and~\ref{thm:full-kernel}, respectively. In
each case, we construct a local box of admissible exponents and apply the
interpolation result from Section~\ref{sec:restricted-weak}. 

Section~\ref{sec:sharpness-full} concerns limitations of the
full-dimensional theory. It proves the failure of the weak $L^1$ endpoint
and establishes the H\"ormander-type necessary conditions in
Theorem~\ref{thm:lorentz-zero-kernel}. Section~\ref{sec:counterexamples}
proves Theorem~\ref{thm:counterexamples} by means of an
additive-combinatorial construction. The final part of that section gives
complementary positive results for kernels with additional structure,
including a strong endpoint estimate for cube kernels and bounds with target
exponent below $1$ for radially decreasing kernels.

\section{Restricted weak-type estimates and multilinear interpolation} \label{sec:restricted-weak}

In this section we establish the restricted weak-type estimates used in the proofs of Theorem~\ref{thm:one-parameter} and Theorem~\ref{thm:full-kernel}. 


\subsection{One-parameter estimates}

The restricted weak-type estimates needed for the one-parameter model are consequences of the weak-kernel estimate of Guliyev and Nazirova~\cite[Theorem~4.1]{guliyev2008oneil}.

\begin{lemma}\label{lem:one-parameter-restricted-weak}
Let $\alpha_1,\ldots,\alpha_m$ be nonzero pairwise distinct real numbers. Let
$0<\gamma_1,\ldots,\gamma_m,\beta<1$ satisfy
\[
1<
\sum_{i=1}^m\gamma_i+\beta
\leq
2.
\]
Define $1\leq p_0<\infty$ by
\[
\frac1{p_0}
=
\sum_{i=1}^m\gamma_i+\beta-1.
\]
Then there exists a positive constant $C=C(n,m,\alpha_1,\ldots,\alpha_m,
\gamma_1,\ldots,\gamma_m,\beta)$
such that
\begin{equation}\label{eq:banach-vertex-estimate}
\big\|T_{\chi_A}(\chi_{E_1},\ldots,\chi_{E_m})\big\|_{p_0,\infty}
\leq
C
\Big(\prod_{i=1}^m |E_i|^{\gamma_i}\Big)
|A|^\beta
\end{equation}
for all measurable sets $E_1,\ldots,E_m,A\subseteq\R^n$ of finite measure.
\end{lemma}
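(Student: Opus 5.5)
The plan is to obtain \eqref{eq:banach-vertex-estimate} directly from the weak-kernel Lebesgue estimate of Guliyev and Nazirova~\cite[Theorem~4.1]{guliyev2008oneil}, recalled in the introduction, by testing it on characteristic functions. First I will fix the exponents
\[
p_i:=\frac1{\gamma_i}\quad(i=1,\ldots,m),\qquad r:=\frac1\beta .
\]
Since $0<\gamma_i,\beta<1$, these satisfy $1<p_1,\ldots,p_m,r<\infty$. The hypothesis $1<\sum_i\gamma_i+\beta\leq 2$ then gives
\[
0<\frac1{p_0}=\sum_{i=1}^m\frac1{p_i}+\frac1r-1\leq 1,
\]
so $1\leq p_0<\infty$. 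These are exactly the hypotheses of the Guliyev--Nazirova theorem, including the endpoint $p_0=1$. That theorem yields
\[
\big\|T_g(f_1,\ldots,f_m)\big\|_{p_0}\leq C\Big(\prod_{i=1}^m\|f_i\|_{p_i}\Big)\|g\|_{r,\infty},
\]
with $C$ depending only on $n,m,\alpha_1,\ldots,\alpha_m$ and the exponents, and hence only on $n,m,\alpha_1,\ldots,\alpha_m,\gamma_1,\ldots,\gamma_m,\beta$.

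Next I will specialize to $f_i=\chi_{E_i}$ and $g=\chi_A$, where the sets have finite measure. For these functions one has $\|\chi_{E_i}\|_{p_i}=|E_i|^{1/p_i}=|E_i|^{\gamma_i}$. For the kernel, the decreasing rearrangement is $\chi_A^*=\chi_{[0,|A|)}$, so
\[
\|\chi_A\|_{r,\infty}=\sup_{0<t<|A|}t^{1/r}=|A|^{1/r}=|A|^{\beta}.
\]
Finally, the left side is controlled by the embedding $\|F\|_{p_0,\infty}\leq\|F\|_{p_0}$. This embedding follows from $t^{1/p_0}F^*(t)\leq\big(\int_0^t F^*(u)^{p_0}\,\rd u\big)^{1/p_0}$, which holds because $F^*$ is decreasing. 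Combining these three facts gives \eqref{eq:banach-vertex-estimate}.

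I expect no genuine obstacle. The only point needing care is checking that the admissible ranges of the cited theorem match the stated ones. In particular, the cited result must allow the target exponent $p_0=1$, which corresponds to $\sum_i\gamma_i+\beta=2$, and must have strict inequalities on $p_i,r$; the open conditions $0<\gamma_i,\beta<1$ guarantee the latter. Note also that the reduction to nonnegative functions is automatic here, since all functions involved are characteristic functions.
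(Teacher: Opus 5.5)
Your proposal is correct and follows the paper's proof: you set $p_i=1/\gamma_i$ and $r=1/\beta$, apply the Guliyev--Nazirova weak-kernel theorem (which covers the endpoint $p_0=1$) to characteristic functions, and pass to the weak norm. You also write out the identities $\|\chi_{E_i}\|_{p_i}=|E_i|^{\gamma_i}$ and $\|\chi_A\|_{r,\infty}=|A|^{\beta}$ and the embedding $L^{p_0}\subseteq L^{p_0,\infty}$, which the paper summarizes as ``this implies.''
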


\begin{proof}
Set
\[
p_i:=\frac1{\gamma_i},
\quad
i=1,\ldots,m,
\qquad \text{and} \qquad
r:=\frac1\beta.
\]
Since $0<\gamma_1,\ldots,\gamma_m,\beta<1$, we have
\[
1<p_1,\ldots,p_m,r<\infty.
\]
Moreover, by the definition of $p_0$,
\[
\frac1{p_0}
=
\sum_{i=1}^m\gamma_i+\beta-1
=
\sum_{i=1}^m\frac1{p_i}
+
\frac1r
-
1.
\]
Since $1\leq p_0<\infty$, the weak-kernel estimate of Guliyev and Nazirova~\cite[Theorem~4.1]{guliyev2008oneil} gives
\[
\big\|T_{\chi_A}(\chi_{E_1},\ldots,\chi_{E_m})\big\|_{p_0}
\leq
C
\Big(\prod_{i=1}^m
\|\chi_{E_i}\|_{p_i}\Big)
\|\chi_A\|_{r,\infty},
\]
for all measurable sets $E_1,\ldots,E_m,A\subseteq\R^n$ of finite measure. This implies~\eqref{eq:banach-vertex-estimate}.
\end{proof}

\subsection{Full-dimensional estimates}

The corresponding restricted weak-type estimates for the full-dimensional model are based on Oberlin's multilinear Young inequality~\cite{oberlin1988multilinear}.

\begin{lemma}\label{lem:full-kernel-restricted-weak}
Let $0<\gamma_1,\ldots,\gamma_m,\beta<1$ satisfy
\[
0<
\sum_{i=1}^m\gamma_i
+
m\beta
-
m
<1.
\]
Define $1<p_0<\infty$ by
\[
\frac1{p_0}
=
\sum_{i=1}^m\gamma_i
+
m\beta
-
m.
\]
Assume also that
\[
\frac1{p_0}< \beta, \qquad \text{and} \qquad
1-\beta< \gamma_i,
\quad
i=1,\ldots,m.
\]
Then there exists a positive constant $C=C(n,m,\gamma_1,\ldots,\gamma_m,\beta)$
such that
\begin{equation}\label{eq:full-kernel-restricted-weak}
\big\|T^{\chi_\Omega}(\chi_{E_1},\ldots,\chi_{E_m})\big\|_{p_0,\infty}
\leq
C
\Big(\prod_{i=1}^m |E_i|^{\gamma_i}\Big)
|\Omega|^\beta
\end{equation}
for all measurable sets $E_1,\ldots,E_m\subseteq\R^n$ and
$\Omega \subseteq(\R^n)^m$ of finite measure.
\end{lemma}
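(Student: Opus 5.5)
The proof mirrors that of Lemma~\ref{lem:one-parameter-restricted-weak}: we translate the exponents $\gamma_i,\beta$ into Lebesgue exponents and apply Oberlin's inequality~\eqref{Oberlin88} with indicator functions. Set
\[
p_i:=\frac1{\gamma_i},\quad i=1,\ldots,m,\qquad r:=\frac1\beta .
\]
Since $0<\gamma_i,\beta<1$, we have $1<p_1,\ldots,p_m,r<\infty$. The hypothesis $0<\sum_i\gamma_i+m\beta-m<1$ gives $1<p_0<\infty$. Rewriting the definition of $p_0$ yields exactly the scaling relation
\[
m+\frac1{p_0}=\sum_{i=1}^m\frac1{p_i}+\frac mr .
\]
The remaining assumptions read $\frac1{p_0}<\frac1r$ and $1-\frac1r<\frac1{p_i}$. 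These are strict versions of Oberlin's side conditions, so they certainly imply them.

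Oberlin's theorem therefore applies with exponents $(p_1,\ldots,p_m,r,p_0)$. Taking $f_i=\chi_{E_i}$ and $K=\chi_\Omega$, it gives
\[
\big\|T^{\chi_\Omega}(\chi_{E_1},\ldots,\chi_{E_m})\big\|_{p_0}
\leq C\prod_{i=1}^m|E_i|^{1/p_i}\,|\Omega|^{1/r}
= C\Big(\prod_{i=1}^m|E_i|^{\gamma_i}\Big)|\Omega|^{\beta}.
\]
Since $\|h\|_{p_0,\infty}\leq\|h\|_{p_0}$ (by Chebyshev's inequality, with constant $1$ under the normalization in the introduction), estimate~\eqref{eq:full-kernel-restricted-weak} follows.

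There is no serious obstacle. The only point needing care is bookkeeping: checking that each hypothesis of the lemma corresponds to one condition of Oberlin's theorem. Finiteness of the measures guarantees $\chi_{E_i}\in L^{p_i}$ and $\chi_\Omega\in L^r$, and positivity of all functions involved means the operator is well defined with values in $[0,\infty]$. The constant depends only on $n,m,\gamma_i,\beta$, since Oberlin's constant depends only on $n,m$ and the exponents.
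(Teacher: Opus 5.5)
Your proposal is correct and matches the paper's proof: you set $p_i=1/\gamma_i$ and $r=1/\beta$, verify Oberlin's scaling relation and side conditions, apply his inequality to $\chi_{E_i}$ and $\chi_\Omega$, and pass from the strong to the weak $L^{p_0}$ bound. Your added remark that $\|h\|_{p_0,\infty}\leq\|h\|_{p_0}$ holds with constant $1$ only makes explicit a step the paper leaves implicit.
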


\begin{proof}
Set
\[
 p_i:=\frac1{\gamma_i},
\quad i=1,\ldots,m, 
\qquad \text{and} \qquad 
 r:=\frac1\beta.
\]
Since $0<\gamma_1,\ldots,\gamma_m,\beta<1$, we have \[1< p_1,\ldots, p_m, r<\infty.
\]
Moreover,
\[
m+\frac1{p_0}
=
\sum_{i=1}^m \gamma_i
+
m\beta
=
\sum_{i=1}^m \frac1{p_i}
+
\frac m{r}.
\]
The remaining assumptions give
\[
\frac1{p_0}<\beta=\frac1{r},
\]
and
\[
1-\frac1{r}
=
1-\beta
<
\gamma_i
=
\frac1{p_i},
\qquad
i=1,\ldots,m.
\]
Thus Oberlin's multilinear Young inequality~\cite{oberlin1988multilinear} applies and gives
\[
\big\|T^{\chi_\Omega}(\chi_{E_1},\ldots,\chi_{E_m})\big\|_{p_0}
\leq
C
\Big(\prod_{i=1}^m
\|\chi_{E_i}\|_{p_i}
\Big)
\|\chi_\Omega\|_{r},
\]
for all measurable sets $E_1,\ldots,E_m\subseteq\R^n$ and
$\Omega \subseteq(\R^n)^m$ of finite measure, which implies~\eqref{eq:full-kernel-restricted-weak}.
\end{proof}

\subsection{Multilinear interpolation}
\label{sec:lorentz-lifting}

The following proposition is a consequence of multilinear Marcinkiewicz interpolation; see~\cite{sharpley1977multilinear,grafakos2001remarks,
grafakos2012multilinear, grafakos2014modern}. The parameter $\kappa$ specifies the dimension $\kappa n$ of the kernel domain, with $\kappa=1$ for the one-parameter model and $\kappa=m$ for the full-dimensional model.

\begin{proposition}\label{prop:lorentz-interpolation}
Fix one of the following two cases: $\kappa=1$ and
$\mathcal T_h=T_h$, or $\kappa=m$ and $\mathcal T_h=T^h$. Let $0<a_1,\ldots,a_m,b<1$ satisfy
\[
0<
\sum_{i=1}^m a_i+\kappa b-\kappa
<
1,
\]
and define $1<p<\infty$ by
\[
\frac1p
=
\sum_{i=1}^m a_i+\kappa b-\kappa.
\]
Assume that there exist numbers $a_i^-$, $a_i^+$ with 
\[
0<a_i^-<a_i<a_i^+<1, \qquad i=1,\ldots,m,
\]
and $b_-$, $b_+$ with 
\[
0<b_-<b<b_+<1
\]
such that, for every sign vector
\[
\eps=(\eps_1,\ldots,\eps_m,\eps_0)\in\{-,+\}^{m+1},
\]
the number $p_\eps$ defined by
\[
\frac1{p_\eps}
=
\sum_{i=1}^m a_i^{\eps_i}
+
\kappa b_{\eps_0}
-
\kappa
\]
satisfies $1<p_\eps<\infty$, and the restricted weak-type estimate
\begin{equation}\label{eq:abstract-restricted-weak}
\big\|
\mathcal T_{\chi_A}
(\chi_{E_1},\ldots,\chi_{E_m})
\big\|_{p_\eps,\infty}
\leq
C_0
\Big(
\prod_{i=1}^m |E_i|^{a_i^{\eps_i}}
\Big)
|A|^{b_{\eps_0}}
\end{equation}
holds for all measurable sets $E_1,\ldots,E_m\subseteq\R^n$ and $A\subseteq(\R^n)^\kappa$
of finite measure, where $C_0$ is independent of $\eps$ and of the sets. Let $0<q_1,\ldots,q_m,s\leq\infty$ and define $0<q\leq\infty$ by
\[
\frac1q
=
\sum_{i=1}^m\frac1{q_i}
+
\frac1s.
\]
Then there exists a positive constant $C$ such that
\begin{equation}\label{eq:abstract-lorentz-lifting}
\big\|
\mathcal T_h(f_1,\ldots,f_m)
\big\|_{p,q}
\leq
C
\Big(
\prod_{i=1}^m
\|f_i\|_{1/a_i,q_i}
\Big)
\|h\|_{1/b,s}
\end{equation}
for all $f_i\in L^{1/a_i,q_i}(\R^n)$, $i=1,\ldots,m$, and all $h\in L^{1/b,s}\big((\R^n)^\kappa\big)$.
\end{proposition}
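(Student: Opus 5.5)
The plan is to treat $(f_1,\ldots,f_m,h)\mapsto\mathcal T_h(f_1,\ldots,f_m)$ as a positive $(m+1)$-linear operator. We then run the multilinear Marcinkiewicz interpolation theorem for restricted weak-type estimates at the $2^{m+1}$ vertices of a box, in the form of \cite[Theorem~1.4.19]{grafakos2014classical} (cf.~\cite{sharpley1977multilinear,grafakos2001remarks,grafakos2012multilinear}). The exponent points are $(a_1^{\eps_1},\ldots,a_m^{\eps_m},b_{\eps_0})$ with targets $1/p_\eps$. The map $(x_1,\ldots,x_m,y)\mapsto\sum_i x_i+\kappa y-\kappa$ is affine with all coefficients nonzero, equal to $1$ or $\kappa$. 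This is exactly the nondegeneracy needed so that an interior point of the box is reached by interpolating in each variable separately. Since $(a_1,\ldots,a_m,b)$ lies strictly inside the box and $1/p$ is the same affine function evaluated there, the theorem should yield \eqref{eq:abstract-lorentz-lifting} with $1/q=\sum_i 1/q_i+1/s$.

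If one wants the argument self-contained, the steps are as follows.

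\textbf{Step 1.} By positivity, reduce to nonnegative simple functions and pass to general nonnegative functions at the end by monotone convergence. Decompose each input dyadically as $f_i\le\sum_{k\in\Z}2^{k+1}\chi_{E_{i,k}}$ with $E_{i,k}=\{2^k<f_i\le 2^{k+1}\}$, and similarly $h\le\sum_j 2^{j+1}\chi_{A_j}$. Use the standard equivalence $\|f_i\|_{1/a_i,q_i}\approx\|(2^k|E_{i,k}|^{a_i})_k\|_{\ell^{q_i}}$; for this one should use level sets of $f_i^*$, so that the sets are nested and the equivalence holds for every $0<q_i\le\infty$.

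\textbf{Step 2.} Estimate the distribution function of $\mathcal T_h(f_1,\ldots,f_m)$ at height $\lambda$ by multilinearity, expanding into the $(m+1)$-fold sum of terms $2^{k_1+\cdots+k_m+j}\,\mathcal T_{\chi_{A_j}}(\chi_{E_{1,k_1}},\ldots)$. For each term choose the vertex $\eps$ according to whether $2^{k_i}|E_{i,k_i}|^{a_i}$ (respectively $2^{j}|A_j|^{b}$) sits above or below a threshold tied to $\lambda$ and $|\{\cdot>\lambda\}|$. Writing $|E|^{a^\pm}=|E|^{a}|E|^{\pm\delta}$, each vertex estimate \eqref{eq:abstract-restricted-weak} produces a factor that decays geometrically in the corresponding index on the chosen side. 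Since every $p_\eps>1$, the spaces $L^{p_\eps,\infty}$ are normable, so the pieces can be summed in $L^{p_\eps,\infty}$ up to a constant.

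\textbf{Step 3.} The resulting bound for $t^{1/p}(\mathcal T_h(f))^*(t)$ is a discrete convolution of the sequences $(2^{k}|E_{i,k}|^{a_i})_k$ and $(2^j|A_j|^b)_j$ against geometrically decaying kernels. Taking the $\ell^q$ norm over dyadic $t$, apply Young's inequality for sequences (for $q<1$, the $q$-triangle inequality) together with H\"older's inequality in $\ell^q$ under $1/q=\sum 1/q_i+1/s$. This gives the product of the input Lorentz norms.

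I expect the main obstacle to be Step 2: choosing the vertex for each multi-index so that all $m+1$ geometric series converge simultaneously. Here the interior hypotheses $a_i^-<a_i<a_i^+$ and $b_-<b<b_+$, and the nonvanishing coefficients of the affine map, are indispensable. I would try first to quote \cite[Theorem~1.4.19]{grafakos2014classical} directly after checking its hypotheses, rather than redo this bookkeeping.
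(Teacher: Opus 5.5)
Your plan is the paper's: regard the kernel as an $(m+1)$-st input and invoke multilinear Marcinkiewicz interpolation for the restricted weak-type vertex estimates. Because $1/p_\eps$ is the same affine function of $(a_1^{\eps_1},\ldots,a_m^{\eps_m},b_{\eps_0})$, the tuple $(a_1,\ldots,a_m,b,1/p)$ lies in the open convex hull of the vertex tuples; the paper writes this out with product weights $\lambda_\eps=\prod_{i=0}^m w_i(\eps_i)>0$, and your observation that every coefficient of that affine map is nonzero is a nondegeneracy point the paper leaves implicit.

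One correction: \cite[Theorem~1.4.19]{grafakos2014classical} is the linear off-diagonal Marcinkiewicz theorem, and applying it one variable at a time yields at best the target $L^{p,\min\{q_1,\ldots,q_m,s\}}$. To obtain $1/q=\sum_i 1/q_i+1/s$ you must quote the genuinely multilinear version (\cite{grafakos2001remarks}, \cite{grafakos2014modern}).
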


\begin{proof}
We verify that the exponent tuple in the conclusion lies in the interior of the convex hull of the tuples occurring in the restricted
weak-type estimates. 

Set 
\[
\theta_i
:=
\frac{a_i-a_i^-}{a_i^+-a_i^-}, \quad i=1,\ldots,m, \qquad \text{and} \qquad \theta_0
:=
\frac{b-b_-}{b_+-b_-}.
\]
Since
\[
a_i^-<a_i<a_i^+,
\qquad i=1,\ldots,m,
\]
and
\[
b_-<b<b_+,
\]
we have
\[
0<\theta_i<1,
\qquad i=0,1,\ldots,m.
\]
Moreover,
\[
a_i
=
(1-\theta_i)a_i^-+\theta_i a_i^+,
\qquad
i=1,\ldots,m,
\]
and
\[
b
=
(1-\theta_0)b_-+\theta_0b_+.
\]

For each $i=0,\ldots,m$, define
\[
w_i(-):=1-\theta_i,
\qquad
w_i(+):=\theta_i.
\]
For
\[
\eps=(\eps_1,\ldots,\eps_m,\eps_0)
\in\{-,+\}^{m+1},
\]
set
\[
\lambda_\eps
:=
\prod_{i=0}^m w_i(\eps_i) > 0.
\]
We have
\begin{align*}
\sum_{\eps\in\{-,+\}^{m+1}}\lambda_\eps
&=
\prod_{i=0}^m
\Bigg(
\sum_{\sigma\in\{-,+\}}w_i(\sigma)
\Bigg)\\
&=
\prod_{i=0}^m
\big((1-\theta_i)+\theta_i\big)\\
&=
1.
\end{align*}
Moreover, for each $i=1,\ldots,m$,
\begin{align*}
\sum_{\eps\in\{-,+\}^{m+1}}
\lambda_\eps \, a_i^{\eps_i}
&=
\Bigg(
\sum_{\sigma\in\{-,+\}}
w_i(\sigma) \, a_i^\sigma
\Bigg)
\prod_{\substack{0\leq j\leq m\\ j\neq i}}
\Bigg(
\sum_{\sigma\in\{-,+\}}w_j(\sigma)
\Bigg)\\
&=
(1-\theta_i)a_i^-+\theta_i a_i^+\\
&=
a_i.
\end{align*}
Similarly,
\begin{align*}
\sum_{\eps\in\{-,+\}^{m+1}}
\lambda_\eps \, b_{\eps_0}
&=
\Bigg(
\sum_{\sigma\in\{-,+\}}
w_0(\sigma) \, b_\sigma
\Bigg)
\prod_{j=1}^m
\Bigg(
\sum_{\sigma\in\{-,+\}}w_j(\sigma)
\Bigg)\\
&=
(1-\theta_0)b_-+\theta_0b_+\\
&=
b.
\end{align*}

Now, for every $\eps\in\{-,+\}^{m+1}$ we define
\[
\mathbf v_\eps
:=
\left(
a_1^{\eps_1},
\ldots,
a_m^{\eps_m},
b_{\eps_0},
\frac1{p_\eps}
\right),
\]
and 
\[
\mathbf v
:=
\left(
a_1,\ldots,a_m,b,\frac1p
\right).
\]
Since
\[
\frac1{p_\eps}
=
\sum_{i=1}^m a_i^{\eps_i}
+
\kappa b_{\eps_0}
-
\kappa,
\]
we obtain
\begin{align*}
\sum_{\eps\in\{-,+\}^{m+1}}
\lambda_\eps \, \frac1{p_\eps}
&=
\sum_{\eps \in\{-,+\}^{m+1}}
\lambda_\eps
\Bigg(
\sum_{i=1}^m a_i^{\eps_i}
+
\kappa b_{\eps_0}
-
\kappa
\Bigg)\\
&=
\sum_{i=1}^m a_i+\kappa b-\kappa\\
&=
\frac1p.
\end{align*}
Consequently,
\[
\mathbf v
=
\sum_{\eps\in\{-,+\}^{m+1}}
\lambda_\eps\mathbf v_\eps.
\]
This proves that $\mathbf v$ is a convex combination of the vectors $\mathbf v_\eps$, $\eps\in\{-,+\}^{m+1}$. The result follows.
\end{proof}

\section{Lorentz estimates for the one-parameter model}\label{sec:one-parameter}

In this section we prove Theorem~\ref{thm:one-parameter}. We first verify the local-box condition required by Proposition~\ref{prop:lorentz-interpolation} with $\kappa=1$, and then combine the resulting local box with the restricted weak-type estimates established in
Section~\ref{sec:restricted-weak}.

\subsection{A local box of exponents}

\begin{lemma}\label{lem:one-parameter-local-box}
Let $0<a_1,\ldots,a_m,b<1$ and suppose that
\[
1<\sum_{i=1}^m a_i+b<2.
\]
Then there exist numbers $a_i^-$ and $a_i^+$ with
\[
0<a_i^-<a_i<a_i^+<1,
\qquad i=1,\ldots,m,
\]
and also numbers $b_-, b_+$ with 
\[
0<b_-<b<b_+<1,
\]
such that, for every sign vector
$\eps=(\eps_1,\ldots,\eps_m,\eps_0)\in\{-,+\}^{m+1}$, one has
\begin{equation}\label{eq:banach-cube-conditions}
1
<
\sum_{i=1}^m a_i^{\eps_i}
+
b_{\eps_0}
<
2.
\end{equation}
\end{lemma}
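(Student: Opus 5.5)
This is an openness argument: all the constraints are strict inequalities, so a sufficiently small box around $(a_1,\ldots,a_m,b)$ satisfies them. Set $S:=\sum_{i=1}^m a_i+b$, so that $1<S<2$, and let
\[
\delta:=\min\Big\{\,a_1,\ldots,a_m,\;b,\;1-a_1,\ldots,1-a_m,\;1-b,\;S-1,\;2-S\,\Big\}>0 .
\]
Next fix a small parameter $\eta>0$, to be chosen shortly. Define
\[
a_i^\pm:=a_i\pm\eta,\quad i=1,\ldots,m,\qquad b_\pm:=b\pm\eta .
\]

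The first requirement is that $\eta<\delta$. Then $a_i^-=a_i-\eta>a_i-\delta\geq 0$ and $a_i^+=a_i+\eta<a_i+(1-a_i)=1$. The same reasoning applies to $b_\pm$, so the strict chains $0<a_i^-<a_i<a_i^+<1$ and $0<b_-<b<b_+<1$ hold.

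For the sum condition \eqref{eq:banach-cube-conditions}, take any sign vector $\eps\in\{-,+\}^{m+1}$. Each of the $m+1$ summands differs from its unperturbed value by exactly $\pm\eta$, so
\[
\Big|\sum_{i=1}^m a_i^{\eps_i}+b_{\eps_0}-S\Big|\leq (m+1)\eta .
\]
We therefore also require $(m+1)\eta<\delta$. Since $\delta\le\min\{S-1,\,2-S\}$, this gives $1<\sum_i a_i^{\eps_i}+b_{\eps_0}<2$ for every $\eps$.

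Taking $\eta:=\delta/(2(m+1))$ meets both requirements at once, which completes the argument. Because there are only finitely many sign vectors and every constraint is strict, no step presents a real obstacle. The only care needed is to choose $\eta$ uniformly in $\eps$, and the crude bound $(m+1)\eta$ on the total perturbation does exactly this.
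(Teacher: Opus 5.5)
Your proof is correct and takes essentially the same route as the paper. Both perturb every exponent symmetrically by one small parameter and bound the deviation of the sum from $\sum_i a_i+b$ by $(m+1)$ times that parameter; the paper simply puts the factor $\frac{1}{m+1}$ inside the minimum that defines its $\delta$, whereas you divide by $2(m+1)$ afterwards.
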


\begin{proof}
Set
\[
\Sigma:=\sum_{i=1}^m a_i+b.
\]
By assumption, $1<\Sigma<2$. Choose $\delta>0$ so small that
\[
\delta
<
\frac12
\min\left\{
a_1,\ldots,a_m,b,
1-a_1,\ldots,1-a_m,1-b,
\frac{\Sigma-1}{m+1},
\frac{2-\Sigma}{m+1}
\right\}.
\]
Then set
\[
a_i^-=a_i-\delta,
\qquad
a_i^+=a_i+\delta,
\qquad i=1,\ldots,m,
\]
and
\[
b_-=b-\delta,
\qquad
b_+=b+\delta.
\]
The inequalities in the statement follow from the choice of $\delta$. Moreover, for every
$\eps\in\{-,+\}^{m+1}$,
\[
\Sigma-(m+1)\delta
\leq
\sum_{i=1}^m a_i^{\eps_i}
+
b_{\eps_0}
\leq
\Sigma+(m+1)\delta.
\]
Since $\Sigma-(m+1)\delta>1$ and $\Sigma+(m+1)\delta<2$, this proves~\eqref{eq:banach-cube-conditions}. \end{proof}

\subsection{Proof of Theorem~\ref{thm:one-parameter}}

We apply Proposition~\ref{prop:lorentz-interpolation} with
\[
\mathcal T_h=T_h
\qquad\text{and}\qquad
\kappa=1.
\]

Set
\[
a_i:=\frac1{p_i},
\qquad
i=1,\ldots,m,
\qquad
b:=\frac1r,
\]
and take $\kappa=1$. By~\eqref{eq:p-definition},
\[
\frac1p
=
\sum_{i=1}^m a_i+b-1,
\]
while~\eqref{eq:open-banach-condition} gives
\[
1<
\sum_{i=1}^m a_i+b
<
2.
\]
Hence Lemma~\ref{lem:one-parameter-local-box} provides numbers $a_i^-$, $a_i^+$ with 
\[
0<a_i^-<a_i<a_i^+<1,
\qquad
i=1,\ldots,m,
\]
and numbers $b_-$, $b_+$ with 
\[
0<b_-<b<b_+<1,
\]
such that, for every
\[
\eps=(\eps_1,\ldots,\eps_m,\eps_0)
\in\{-,+\}^{m+1},
\]
the number $p_\eps$ defined by
\[
\frac1{p_\eps}
=
\sum_{i=1}^m a_i^{\eps_i}
+
b_{\eps_0}
-
1
\]
satisfies
\[
1<p_\eps<\infty.
\]

Applying Lemma~\ref{lem:one-parameter-restricted-weak} with
\[
\gamma_i=a_i^{\eps_i},
\qquad
i=1,\ldots,m,
\qquad
\beta=b_{\eps_0},
\]
we obtain
\[
\big\|
T_{\chi_A}(\chi_{E_1},\ldots,\chi_{E_m})
\big\|_{p_\eps,\infty}
\leq
C_\eps
\Big(\prod_{i=1}^m |E_i|^{a_i^{\eps_i}}\Big)
|A|^{b_{\eps_0}}
\]
for all measurable sets $E_1,\ldots,E_m,A\subseteq\R^n$ of finite measure.
Since there are only finitely many sign vectors $\eps$, the constants $C_\eps$ may be replaced by a single constant independent of~$\eps$.

All the hypotheses of Proposition~\ref{prop:lorentz-interpolation} are therefore
satisfied with $\kappa=1$. Using also~\eqref{eq:q-definition}, we conclude that
\[
\big\|T_g(f_1,\ldots,f_m)\big\|_{p,q}
\leq
C
\Big(\prod_{i=1}^m\|f_i\|_{1/a_i,q_i}\Big)
\|g\|_{1/b,s}.
\]
Since
\[
\frac1{a_i}=p_i,
\quad
i=1,\ldots,m,
\qquad \text{and} \qquad
\frac1b=r,
\]
this is precisely~\eqref{eq:banach-lorentz}.
\qed

\section{Lorentz estimates for the full-dimensional model}
\label{sec:full-dimensional}

In this section we prove Theorem~\ref{thm:full-kernel}. We first verify the local-box condition required by Proposition~\ref{prop:lorentz-interpolation} with $\kappa=m$, and then combine the resulting local box with the restricted weak-type estimates established in Section~\ref{sec:restricted-weak}.

\subsection{A local box of exponents}

\begin{lemma}\label{lem:full-dimensional-local-box}
Let $0<a_1,\ldots,a_m,b<1$ and suppose that
\[
0<
\sum_{i=1}^m a_i+mb-m
<1, \qquad 
\sum_{i=1}^m a_i+mb-m<b,
\]
and
\[
1-b<a_i,
\qquad i=1,\ldots,m.
\]
Then there exist numbers  $a_i^-$, $a_i^+$ with 
\[
0<a_i^-<a_i<a_i^+<1,
\qquad i=1,\ldots,m,
\]
and numbers  $b_-$, $b_+$ with 
\[
0<b_-<b<b_+<1,
\]
such that, for every sign vector
\[
\eps=(\eps_1,\ldots,\eps_m,\eps_0)\in\{-,+\}^{m+1},
\]
the quantity
\[
\rho_\eps
:=
\sum_{i=1}^m a_i^{\eps_i}
+
m b_{\eps_0}
-
m
\]
satisfies
\[
0<\rho_\eps<1, \qquad \rho_\eps<b_{\eps_0}, \qquad 
\text{and} \qquad 
1-b_{\eps_0}<a_i^{\eps_i},
\quad i=1,\ldots,m.
\]
\end{lemma}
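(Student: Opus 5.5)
This is the full-dimensional analogue of Lemma~\ref{lem:one-parameter-local-box}, and I would prove it the same way: perturb every coordinate by a single small amount $\delta>0$. The point is that all the required conditions are finitely many strict inequalities, each affine in $(a_1,\ldots,a_m,b)$ and holding at the center, so they survive a small enough symmetric perturbation.

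Set
\[
\rho:=\sum_{i=1}^m a_i+mb-m,
\]
so the hypotheses read $0<\rho<1$, $\rho<b$, and $1-b<a_i$ for each $i$. Define $a_i^\pm:=a_i\pm\delta$ and $b_\pm:=b\pm\delta$. The box conditions $0<a_i^-<a_i<a_i^+<1$ and $0<b_-<b<b_+<1$ hold as soon as
\[
\delta<\min\{a_i,\,1-a_i,\,b,\,1-b\}.
\]

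Next, bound how far each quantity can move. For every sign vector $\eps$, the identity
\[
\rho_\eps-\rho=\sum_{i=1}^m(a_i^{\eps_i}-a_i)+m(b_{\eps_0}-b)
\]
gives $|\rho_\eps-\rho|\le 2m\delta$. Then:
\begin{itemize}
\item $0<\rho_\eps<1$ holds provided $2m\delta<\min\{\rho,\,1-\rho\}$.
\item Since $|b_{\eps_0}-b|\le\delta$, we get $b_{\eps_0}-\rho_\eps\ge (b-\rho)-(2m+1)\delta$. So $\rho_\eps<b_{\eps_0}$ holds provided $(2m+1)\delta<b-\rho$.
\item Similarly, $a_i^{\eps_i}+b_{\eps_0}-1\ge (a_i+b-1)-2\delta$. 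So $1-b_{\eps_0}<a_i^{\eps_i}$ holds provided $2\delta<a_i+b-1$, which is positive by hypothesis.
\end{itemize}

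Finally, choose $\delta$ to be half the minimum of all these positive thresholds:
\[
\min\Big\{a_i,\ 1-a_i,\ b,\ 1-b,\ \frac{\rho}{2m},\ \frac{1-\rho}{2m},\ \frac{b-\rho}{2m+1},\ \frac{a_i+b-1}{2}\Big\},
\]
taken over $i=1,\ldots,m$. Every listed inequality then holds simultaneously for all $2^{m+1}$ sign vectors.

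There is no genuine obstacle. The only care needed is the condition $\rho_\eps<b_{\eps_0}$: both sides move with $\eps_0$, so the combined variation $(2m+1)\delta$ must be controlled rather than treating the two sides separately.
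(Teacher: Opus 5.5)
Your proof is correct and follows the paper's argument: perturb every coordinate by a single small $\delta$ so that finitely many strict affine inequalities survive. The only difference is that you bound the variation of $b_{\eps_0}-\rho_\eps$ by $(2m+1)\delta$ via the triangle inequality, whereas the paper writes the variation exactly as $-\sum_i(a_i^{\eps_i}-a_i)-(m-1)(b_{\eps_0}-b)$ and obtains the sharper bound $(2m-1)\delta$; either bound suffices.
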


\begin{proof}
Set
\[
\rho:=
\sum_{i=1}^m a_i+mb-m.
\]
By assumption,
\[
0<\rho<1,
\qquad
\rho<b,
\qquad
a_i+b-1>0,
\quad i=1,\ldots,m.
\]
Choose $\delta>0$ so small that
\[
\delta
<
\min\left\{
\min_{1\leq i\leq m}a_i,
\min_{1\leq i\leq m}(1-a_i),
b,
1-b
\right\},
\]
\[
2m\delta<\min\{\rho,1-\rho\},
\]
\[
(2m-1)\delta<b-\rho,
\]
and
\[
2\delta<
\min_{1\leq i\leq m}(a_i+b-1).
\]
Define
\[
a_i^-:=a_i-\delta,
\qquad
a_i^+:=a_i+\delta,
\qquad i=1,\ldots,m,
\]
and
\[
b_-:=b-\delta,
\qquad
b_+:=b+\delta.
\]
The first condition on $\delta$ gives
\[
0<a_i^-<a_i<a_i^+<1,
\qquad i=1,\ldots,m,
\]
and
\[
0<b_-<b<b_+<1.
\]

Now fix
\[
\eps=(\eps_1,\ldots,\eps_m,\eps_0)\in\{-,+\}^{m+1}.
\]
Since each $a_i^{\eps_i}$ differs from $a_i$ by at most $\delta$, and
$b_{\eps_0}$ differs from $b$ by at most $\delta$, we have
\[
\left|
\rho_\eps-\rho
\right|
=
\left|
\sum_{i=1}^m (a_i^{\eps_i}-a_i)
+
m(b_{\eps_0}-b)
\right|
\leq
2m\delta.
\]
Using the second condition on $\delta$ we obtain
\[
\rho_\eps
\geq
\rho-2m\delta
>
0
\]
and
\[
\rho_\eps
\leq
\rho+2m\delta
<
1.
\]
Thus
\[
0<\rho_\eps<1.
\]

Next we prove that $\rho_\eps<b_{\eps_0}$. Note that
\[
b-\rho
=
b-
\left(
\sum_{i=1}^m a_i+mb-m
\right)
>0.
\]
Moreover,
\begin{align*}
\left|
(b_{\eps_0}-\rho_\eps)-(b-\rho)
\right| & =
\left|
-\sum_{i=1}^m(a_i^{\eps_i}-a_i)
-(m-1)(b_{\eps_0}-b)
\right|
\\
&\leq
m\delta+(m-1)\delta
=
(2m-1)\delta.
\end{align*}
Using the third condition on $\delta$ we get
\[
b_{\eps_0}-\rho_\eps
\geq
b-\rho-(2m-1)\delta
>
0.
\]
Hence
\[
\rho_\eps<b_{\eps_0}.
\]

Finally, for each $i=1,\ldots,m$, we have
\[
a_i^{\eps_i}+b_{\eps_0}-1
=
a_i+b-1
+
(a_i^{\eps_i}-a_i)
+
(b_{\eps_0}-b).
\]
Therefore
\[
a_i^{\eps_i}+b_{\eps_0}-1
\geq
a_i+b-1-2\delta.
\]
Using the fourth condition on $\delta$ we obtain
\[
a_i^{\eps_i}+b_{\eps_0}-1>0,
\]
or equivalently,
\[
1-b_{\eps_0}<a_i^{\eps_i},
\qquad
i=1,\ldots,m.
\]
This proves all the required inequalities.
\end{proof}

\subsection{Proof of Theorem~\ref{thm:full-kernel}}

We apply Proposition~\ref{prop:lorentz-interpolation} with
\[
\mathcal T_h=T^h
\qquad\text{and}\qquad
\kappa=m.
\]

Set
\[
a_i:=\frac1{p_i},
\qquad
i=1,\ldots,m,
\qquad
b:=\frac1r,
\]
and take $\kappa=m$. By~\eqref{eq:full-kernel-lorentz-scaling},
\[
\frac1p
=
\sum_{i=1}^m a_i+mb-m.
\]
Since $1<p<\infty$, we have
\[
0<
\sum_{i=1}^m a_i+mb-m
<
1.
\]
Moreover, the assumptions~\eqref{eq:full-kernel-lorentz-interior} give
\[
\sum_{i=1}^m a_i+mb-m<b
\]
and
\[
1-b<a_i,
\qquad
i=1,\ldots,m.
\]
Hence Lemma~\ref{lem:full-dimensional-local-box} gives numbers $a_i^-$, $a_i^+$ with 
\[
0<a_i^-<a_i<a_i^+<1,
\qquad
i=1,\ldots,m,
\]
and $b_-$, $b_+$ with 
\[
0<b_-<b<b_+<1,
\]
such that, for every
\[
\eps=(\eps_1,\ldots,\eps_m,\eps_0)
\in\{-,+\}^{m+1},
\]
the quantity
\[
\rho_\eps
:=
\sum_{i=1}^m a_i^{\eps_i}
+
m b_{\eps_0}
-
m
\]
satisfies
\[
0<\rho_\eps<1,
\qquad
\rho_\eps<b_{\eps_0},
\]
and
\[
1-b_{\eps_0}<a_i^{\eps_i},
\qquad
i=1,\ldots,m.
\]
Define $p_\eps$ by
\[
\frac1{p_\eps}:=\rho_\eps.
\]
Then
\[
1<p_\eps<\infty.
\]

Applying Lemma~\ref{lem:full-kernel-restricted-weak} with
\[
\gamma_i=a_i^{\eps_i},
\qquad
i=1,\ldots,m,
\qquad
\beta=b_{\eps_0},
\]
we obtain
\[
\big\|
T^{\chi_\Omega}
(\chi_{E_1},\ldots,\chi_{E_m})
\big\|_{p_\eps,\infty}
\leq
C_\eps
\Big(\prod_{i=1}^m |E_i|^{a_i^{\eps_i}}\Big)
|\Omega|^{b_{\eps_0}}
\]
for all measurable sets $E_1,\ldots,E_m\subseteq\R^n$ and
$\Omega\subseteq(\R^n)^m$ of finite measure. Since there are only finitely many sign vectors, the
constants $C_\eps$ may be replaced by a single constant independent of
$\eps$.

All the hypotheses of Proposition~\ref{prop:lorentz-interpolation} are therefore
satisfied with $\kappa=m$. Using also~\eqref{eq:full-kernel-lorentz-q}, we conclude that
\[
\big\|
T^K(f_1,\ldots,f_m)
\big\|_{p,q}
\leq
C
\Big(\prod_{i=1}^m
\|f_i\|_{1/a_i,q_i}
\Big)
\|K\|_{1/b,s}.
\]
Since
\[
\frac1{a_i}=p_i,
\qquad
i=1,\ldots,m,
\qquad
\frac1b=r,
\]
this is precisely~\eqref{eq:full-kernel-lorentz}.
\qed

\section{Sharpness results for the full-dimensional model}
\label{sec:sharpness-full}

We complement the estimates of Section~\ref{sec:full-dimensional} with two results describing limitations of the full-dimensional theory. We first show that the strong endpoint estimate for integrable kernels does not extend to
kernels in weak $L^1$. We then derive necessary conditions for the boundedness of $T^K$ when
the kernel is nonnegative and nonzero, including an additional restriction on
the Lorentz indices in the case
\[
\frac1p=\sum_{i=1}^m\frac1{p_i}.
\]
\subsection{Failure at the weak \texorpdfstring{\boldmath $L^1$}{L1} endpoint}

If $K\in L^1((\R^n)^m)$, $1\leq p_1,\ldots,p_m\leq\infty$, and
\[
\sum_{i=1}^m\frac1{p_i}=1,
\]
then Tonelli's theorem and H\"older's inequality give
\[
\big\|T^K(f_1,\ldots,f_m)\big\|_1
\leq
\Big(\prod_{i=1}^m\|f_i\|_{p_i}\Big)\|K\|_1.
\]
The next proposition shows that no uniform estimate of the same form holds when
the $L^1$ norm of the kernel is replaced by its weak $L^1$ quasi-norm.

\begin{proposition}\label{prop:full-kernel-weak-L1-failure}
Let $m\geq2$, and let $1<p_1,\ldots,p_m<\infty$ satisfy
\[
\sum_{i=1}^m\frac1{p_i}=1.
\]
There is no constant $C>0$ such that
\begin{equation}\label{eq:full-kernel-weak-L1-failure}
\big\|T^K(f_1,\ldots,f_m)\big\|_{1,\infty}
\leq
C
\Big(\prod_{i=1}^m\|f_i\|_{p_i}\Big)
\|K\|_{1,\infty}
\end{equation}
for all $f_i\in L^{p_i}(\R^n)$, $i=1,\ldots,m$, and all
$K\in L^{1,\infty}((\R^n)^m)$.
\end{proposition}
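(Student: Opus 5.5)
The plan is to disprove \eqref{eq:full-kernel-weak-L1-failure} with a one-parameter family of truncated kernels of the critical homogeneity $-mn$ on $(\R^n)^m$. Their weak $L^1$ quasi-norms stay bounded, while their $L^1$ norms grow logarithmically. The inputs will be fixed indicator functions of unit balls, so the right-hand side stays bounded. We then show that $T^K(f_1,\ldots,f_m)$ is of size $\log(1/\eta)$ on a fixed ball, so the left-hand side blows up.

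Concretely, write $y=(y_1,\ldots,y_m)\in(\R^n)^m$. For $0<\eta<1/4$ set
\[
K_\eta(y):=|y|^{-mn}\chi_{\{\eta<|y|<1/2\}}(y),
\]
and fix $f_i:=\chi_{B(0,1)}$ for $i=1,\ldots,m$. Then $\prod_i\|f_i\|_{p_i}=|B(0,1)|^{\sum_i 1/p_i}=|B(0,1)|$, a constant independent of $\eta$.

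The weak $L^1$ bound is immediate. We have $K_\eta\leq|y|^{-mn}$ pointwise, and
\[
\big|\{y:\,|y|^{-mn}>\lambda\}\big|=c_{n,m}\lambda^{-1},
\]
so $\|K_\eta\|_{1,\infty}\leq\||y|^{-mn}\|_{1,\infty}=C_{n,m}$ uniformly in $\eta$.

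For the lower bound, take $|x|<1/2$ and $y$ in the support of $K_\eta$. Then $|y_i|\leq|y|<1/2$, so $|x-y_i|<1$ and every factor $f_i(x-y_i)$ equals $1$. Hence, by polar coordinates in $\R^{mn}$,
\[
T^{K_\eta}(f_1,\ldots,f_m)(x)=\int_{\eta<|y|<1/2}|y|^{-mn}\,\rd y=\omega_{mn-1}\log\frac{1}{2\eta}
\]
for all $|x|<1/2$. Therefore
\[
\big\|T^{K_\eta}(f_1,\ldots,f_m)\big\|_{1,\infty}\geq c\,|B(0,1/2)|\,\log\frac1{2\eta}.
\]
Indeed, for any nonnegative $F$ with $F\geq L$ on a set $E$, one has $\sup_t t F^*(t)\geq |E| L$. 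This quantity tends to infinity as $\eta\to0$, while the right-hand side of \eqref{eq:full-kernel-weak-L1-failure} stays bounded, which is the desired contradiction.

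There is essentially no obstacle; the only points to check are that the constants are uniform in $\eta$ and that the choice of the $f_i$ is compatible with $\sum_i 1/p_i=1$. The hypothesis $m\geq2$ plays no real role in the construction. It only ensures that $1<p_i<\infty$ is possible, since for $m=1$ the condition would force $p_1=1$.
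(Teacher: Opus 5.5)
Your proof is correct and is essentially the paper's argument. Both use the truncated critical kernel $|Y|^{-mn}$, whose weak $L^1$ quasi-norm stays bounded while its integral grows logarithmically, tested against indicator inputs that contain every translate $x-y_i$ on a fixed proportion of the support. The only difference is normalization: the paper fixes the inner radius $1$ and lets the cubes $[0,R]^n$ and the outer radius $R/8$ grow, so both sides carry a factor $R^n$, whereas you fix unit balls and send the inner radius $\eta\to0$, which is the same construction up to a dilation, with $\eta$ playing the role of $4/R$.
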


\begin{proof}
Let
\[
D:=mn,
\]
and let $\omega_D$ denote the measure of the unit ball in $\R^D$. For $R>16$, define
\[
K_R(Y):=|Y|^{-D}\chi_{\{1<|Y|<R/8\}}(Y), \qquad Y \in \R^D.
\]

We first check that the weak $L^1$ norm of $K_R$ is bounded independently of
$R$. If $\lambda\geq1$, then $d_{K_R}(\lambda)=0$, since $K_R(Y)\leq1$ on its support.
If $0<\lambda<1$, then
\[
\{K_R>\lambda\}
\subseteq
\{|Y|^{-D}>\lambda\}
=
\{|Y|<\lambda^{-1/D}\}.
\]
Therefore
\[
\big|\{K_R > \lambda \}\big|
\leq
\big|\{|Y|<\lambda^{-1/D}\}\big|
=
\omega_D \lambda^{-1}.
\]
Thus, for every $\lambda>0$,
\[
\lambda \, \big|\{K_R > \lambda \}\big|\leq \omega_D.
\]
Hence
\begin{equation}\label{eq:KR-weak-L1}
\|K_R\|_{1,\infty}
=
\sup_{\lambda>0}\lambda \, \big|\{K_R > \lambda \}\big|
\leq
\omega_D,
\end{equation}
uniformly in $R$.

Now set
\[
Q_R:=[0,R]^n,
\qquad
f_i:=\chi_{Q_R},
\qquad i=1,\ldots,m.
\]
Since
\[
\sum_{i=1}^m\frac1{p_i}=1,
\]
we have
\begin{equation}\label{eq:input-product-full-kernel-counter}
\prod_{i=1}^m \|f_i\|_{p_i}
=
\prod_{i=1}^m |Q_R|^{1/p_i}
=
\prod_{i=1}^m R^{n/p_i}
=
R^n.
\end{equation}

Let
\[
Q_R':=\left[\frac R4,\frac{3R}{4}\right]^n.
\]
If $x\in Q_R'$ and $Y=(y_1,\ldots,y_m)$ satisfies $1<|Y|<R/8$, then
\[
|y_i|\leq |Y|<\frac R8,
\qquad i=1,\ldots,m.
\]
In particular, each coordinate of $y_i$ has absolute value less than $R/8$.
Since each coordinate of $x$ lies between $R/4$ and $3R/4$, it follows that
\[
x-y_i\in [0,R]^n=Q_R,
\qquad i=1,\ldots,m.
\]
Therefore
\[
\prod_{i=1}^m f_i(x-y_i)=1
\]
whenever $x\in Q_R'$ and $1<|Y|<R/8$. Consequently,
\[
T^{K_R}(f_1,\ldots,f_m)(x)
\geq
\int_{\{1<|Y|<R/8\}} |Y|^{-D}\,\rd Y
\]
for every $x\in Q_R'$.

Using polar coordinates in $\R^D$, we get
\begin{align*}
\int_{\{1<|Y|<R/8\}} |Y|^{-D}\,\rd Y
& =
D\omega_D
\int_1^{R/8}
\rho^{-D}\rho^{D-1}\,\rd \rho \\
& =
D\omega_D \log(R/8).
\end{align*}
Thus
\[
T^{K_R}(f_1,\ldots,f_m)(x)
\geq
D\omega_D\log(R/8)
\]
for every $x\in Q_R'$.

Taking
\[
\lambda_R:=\frac12D\omega_D\log(R/8),
\]
we have
\[
Q_R'\subseteq
\big\{T^{K_R}(f_1,\ldots,f_m)>\lambda_R\big\}.
\]
Therefore
\[
\big\|T^{K_R}(f_1,\ldots,f_m)\big\|_{1,\infty}
\geq
\lambda_R |Q_R'|.
\]
Since
\[
|Q_R'|=\left(\frac R2\right)^n,
\]
we obtain
\begin{equation}\label{eq:output-lower-full-kernel-counter}
\big\|T^{K_R}(f_1,\ldots,f_m)\big\|_{1,\infty}
\geq
c \, R^n \log(R/8),
\end{equation}
for some constant $c$ depending only on $n$ and $m$.

If~\eqref{eq:full-kernel-weak-L1-failure} were true with a constant $C$
independent of $R$, then by~\eqref{eq:KR-weak-L1} and
\eqref{eq:input-product-full-kernel-counter} we would have
\[
\big\|T^{K_R}(f_1,\ldots,f_m)\big\|_{1,\infty}
\leq
C \, \omega_D \, R^n.
\]
This contradicts~\eqref{eq:output-lower-full-kernel-counter} as
$R\to\infty$ and finishes the proof.
\end{proof}

\subsection{Proof of Theorem~\ref{thm:lorentz-zero-kernel}}

Let $K$ be a nonnegative measurable function on $(\R^n)^m$ that is nonzero on a set of positive measure. We shall prove
\eqref{eq:primary-zero-kernel-condition} and
\eqref{eq:secondary-zero-kernel-condition}.

Choose $R,M>0$ such that
\[
K_0(Y)
:=
\min\{K(Y),M\}\chi_{B(0,R)}(Y), \qquad Y \in (\R^n)^m,
\]
satisfies
\[
c_0
:=
\int_{(\R^n)^m}K_0(Y)\,\rd Y
>
0,
\]
where $B(0,R)$ denotes the ball in $(\R^n)^m$ of radius $R$ centered at the origin. 

By positivity,
\[
0\leq
T^{K_0}(f_1,\ldots,f_m)
\leq
T^K(f_1,\ldots,f_m)
\]
for nonnegative functions. Hence~\eqref{eq:lorentz-zero-kernel-boundedness}
also holds with $K_0$ in place of $K$ and with the same constant $C_K$.

Set
\[
Q:=[-2R,2R]^n,
\qquad
Q':=[-R,R]^n.
\]
If $x\in Q'$ and $Y=(y_1,\ldots,y_m)\in B(0,R)$, then $x-y_i\in Q$ for every $i=1,\ldots,m$. Therefore
\begin{equation}\label{eq:nonzero-output-block}
T^{K_0}(\chi_Q,\ldots,\chi_Q)(x)
=
c_0,
\qquad
x\in Q'.
\end{equation}

For each $N\in\N$, choose points
\[
x_1,\ldots,x_N\in\R^n
\]
so far apart that the sets
\[
Q+x_j,
\qquad
j=1,\ldots,N,
\]
are pairwise disjoint. Since $Q'\subseteq Q$, the sets
\[
Q'+x_j,
\qquad
j=1,\ldots,N,
\]
are also pairwise disjoint.

Now define
\[
F^{(N)}
:=
\sum_{j=1}^N\chi_{Q+x_j}.
\]
By positivity and multilinearity,
\[
T^{K_0}
(F^{(N)},\ldots,F^{(N)})
\geq
\sum_{j=1}^N
T^{K_0}
(\chi_{Q+x_j},\ldots,\chi_{Q+x_j}).
\]
By translation invariance and~\eqref{eq:nonzero-output-block},
\[
T^{K_0}
(\chi_{Q+x_j},\ldots,\chi_{Q+x_j})
\geq
c_0\chi_{Q'+x_j},
\qquad
j=1,\ldots,N.
\]
Consequently,
\[
T^{K_0}
(F^{(N)},\ldots,F^{(N)})
\geq
c_0
\sum_{j=1}^N\chi_{Q'+x_j}.
\]
Taking the $L^{p,q}$ quasi-norm, we obtain
\[
c_0
\left\|
\sum_{j=1}^N\chi_{Q'+x_j}
\right\|_{p,q}
\leq
\big\|
T^{K_0}
(F^{(N)},\ldots,F^{(N)})
\big\|_{p,q}.
\]
Since the sets $Q'+x_j$, $j=1,\ldots,N$, are pairwise disjoint,
\[
\sum_{j=1}^N\chi_{Q'+x_j}
=
\chi_{\bigcup_{j=1}^N(Q'+x_j)},
\]
and the union has measure $N|Q'|$. Therefore
\[
\left\|
\sum_{j=1}^N\chi_{Q'+x_j}
\right\|_{p,q}
=
c_{p,q}|Q'|^{1/p}N^{1/p}.
\]
It follows that
\[
c_0c_{p,q}|Q'|^{1/p}N^{1/p}
\leq
\big\|
T^{K_0}
(F^{(N)},\ldots,F^{(N)})
\big\|_{p,q}.
\]
On the other hand,~\eqref{eq:lorentz-zero-kernel-boundedness} gives
\[
\big\|
T^{K_0}
(F^{(N)},\ldots,F^{(N)})
\big\|_{p,q}
\leq
C_K
\prod_{i=1}^m
\|F^{(N)}\|_{p_i,q_i}.
\]
Since the sets $Q+x_j$, $j=1,\ldots,N$, are pairwise disjoint, for each
$i=1,\ldots,m$ we have
\[
\|F^{(N)}\|_{p_i,q_i}
=
c_{p_i,q_i}|Q|^{1/p_i}N^{1/p_i}.
\]
Combining the preceding estimates yields
\[
c_0c_{p,q}|Q'|^{1/p}N^{1/p}
\leq
C_K
\left(
\prod_{i=1}^m
c_{p_i,q_i}|Q|^{1/p_i}
\right)
N^{\sum_{i=1}^m1/p_i}.
\]
All factors other than the powers of $N$ are independent of $N$. Absorbing
them into a positive constant, we obtain
\[
N^{1/p}
\leq
C
N^{\sum_{i=1}^m1/p_i}.
\]
Since the constant is independent of $N$ and the preceding inequality holds
for every $N\in\N$, letting $N\to\infty$ yields
\eqref{eq:primary-zero-kernel-condition}.

Suppose now that
\[
\frac1p
=
\sum_{i=1}^m\frac1{p_i}.
\]
For $i=1,\ldots,m$, define
\[
G_i^{(N)}
:=
\sum_{j=1}^N
j^{-1/p_i}\chi_{Q+x_j}.
\]
By positivity and multilinearity,
\begin{align*}
T^{K_0}
(G_1^{(N)},\ldots,G_m^{(N)})
&\geq
\sum_{j=1}^N
j^{-\sum_{i=1}^m1/p_i}
T^{K_0}
(\chi_{Q+x_j},\ldots,\chi_{Q+x_j}).
\end{align*}
Using translation invariance and~\eqref{eq:nonzero-output-block}, we obtain
\[
T^{K_0}
(G_1^{(N)},\ldots,G_m^{(N)})
\geq
c_0
\sum_{j=1}^N
j^{-\sum_{i=1}^m1/p_i}
\chi_{Q'+x_j}.
\]
Since
\[
\sum_{i=1}^m\frac1{p_i}
=
\frac1p,
\]
this becomes
\begin{equation}\label{eq:weighted-output-lower}
T^{K_0}
(G_1^{(N)},\ldots,G_m^{(N)})
\geq
c_0
\sum_{j=1}^N
j^{-1/p}\chi_{Q'+x_j}.
\end{equation}
The sets $Q'+x_j$, $j=1,\ldots,N$, are pairwise disjoint and have the same
measure. Hence the decreasing rearrangement of
\[
\sum_{j=1}^N j^{-1/p}\chi_{Q'+x_j}
\]
is equal to $j^{-1/p}$ on the interval
\[
[(j-1)|Q'|,j|Q'|),
\qquad
j=1,\ldots,N.
\]
It follows from the definition of the Lorentz quasi-norm that there exists a
positive constant $c=c(p,q)$, independent of $N$, such that
\[
\left\|
\sum_{j=1}^N
j^{-1/p}\chi_{Q'+x_j}
\right\|_{p,q}
\geq
c\,|Q'|^{1/p}
\big(\!\log(N+1)\big)^{1/q}.
\]
Taking the $L^{p,q}$ quasi-norm in~\eqref{eq:weighted-output-lower}, we obtain
\[
c_0c\,|Q'|^{1/p}
\big(\!\log(N+1)\big)^{1/q}
\leq
\big\|
T^{K_0}
(G_1^{(N)},\ldots,G_m^{(N)})
\big\|_{p,q}.
\]
On the other hand,~\eqref{eq:lorentz-zero-kernel-boundedness} gives
\[
\big\|
T^{K_0}
(G_1^{(N)},\ldots,G_m^{(N)})
\big\|_{p,q}
\leq
C_K
\prod_{i=1}^m
\|G_i^{(N)}\|_{p_i,q_i}.
\]
For each $i=1,\ldots,m$, a similar computation with the disjoint cubes
$Q+x_j$ gives
\[
\|G_i^{(N)}\|_{p_i,q_i}
\leq
C_i\,|Q|^{1/p_i}
\big(\!\log(N+1)\big)^{1/q_i},
\]
where $C_i$ is independent of $N$. Combining the preceding estimates and
absorbing all factors independent of $N$ into the constant, we obtain
\[
\big(\!\log(N+1)\big)^{1/q}
\leq
C
\big(\!\log(N+1)\big)^{\sum_{i=1}^m1/q_i}.
\]
Since the constant is independent of $N$ and the preceding inequality holds
for every $N\in\N$, letting $N\to\infty$ yields~\eqref{eq:secondary-zero-kernel-condition} and completes the proof.

\qed

\medskip

Taking
\[
q_i=p_i,
\qquad i=1,\ldots,m,
\qquad
q=p,
\]
in Theorem~\ref{thm:lorentz-zero-kernel} gives the corresponding Lebesgue
result. In particular, if $K$ is nonnegative,
\[
\sum_{i=1}^m\frac1{p_i}<\frac1p
\]
and
\[
T^K:
L^{p_1}(\R^n)\times\cdots\times L^{p_m}(\R^n)
\to
L^p(\R^n)
\]
is bounded, then $K=0$ almost everywhere.

\section{One-parameter estimates with target exponent \texorpdfstring{$p<1$}{p<1}}
\label{sec:counterexamples}

In this section, we study the one-parameter model in the quasi-Banach range. We first prove Theorem~\ref{thm:counterexamples} by means of an additive-combinatorial construction based on finite sets $P,Q\subseteq\R$ for which
\[
|P+Q|^{1+\beta}
\]
is large compared with
\[
|P|\,|Q|\,|P-Q|^\beta.
\]
Here and throughout this section, $|P|$ denotes the cardinality of a finite set $P$.

 We conclude by recording complementary positive estimates that become
available when additional structure is imposed on the kernel. 

\subsection{A counting lemma} 

We first isolate a simple calculation for finite sets that will be used below.

\begin{lemma}\label{lem:failure_restricted_aux}
Let $N\geq2$ and set
\[
P_N:=\big\{2^0,2^1,\ldots,2^{N-1}\big\},
\qquad
Q_N:=-P_N.
\]
Then
\[
|P_N+Q_N|=N(N-1)+1,
\]
and
\[
|P_N-Q_N|=\frac{N(N+1)}2.
\]
\end{lemma}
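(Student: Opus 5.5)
We compute both sets explicitly. Write $P_N+Q_N=\{2^a-2^b:\,0\le a,b\le N-1\}$ and $P_N-Q_N=P_N+P_N=\{2^a+2^b:\,0\le a,b\le N-1\}$. The plan is to show that distinct index pairs give distinct values, up to the obvious coincidences: all diagonal pairs $a=b$ in the first set, and the symmetry $(a,b)\leftrightarrow(b,a)$ in the second. The counts then follow immediately.

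For the sumset $P_N-Q_N$, the values $2^a+2^b$ with $a\le b$ are pairwise distinct by uniqueness of binary expansions. If $a<b$, the number has exactly two binary digits equal to $1$, in positions $a$ and $b$. If $a=b$, it equals $2^{a+1}$, a single power of two. Since a two-digit number is never a power of two, no value of one type equals a value of the other type. Two-digit values are determined by their pair of positions, and the values $2^{a+1}$ are distinct for distinct $a$. Counting unordered pairs with repetition gives $\binom N2+N=N(N+1)/2$.

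For $P_N+Q_N$, the $N$ diagonal pairs all give $0$. The off-diagonal values $2^a-2^b$ with $a\ne b$ are nonzero, and their sign records whether $a>b$. It therefore suffices to show that $(a,b)\mapsto 2^a-2^b$ is injective on pairs with $a>b$; negation then handles the pairs with $a<b$. For $a>b$ we have $2^a-2^b=2^b(2^{a-b}-1)$ with $2^{a-b}-1$ odd. So the $2$-adic valuation of the value recovers $b$, and the odd part recovers $a-b$. This gives $N(N-1)/2$ distinct positive values, the same number of distinct negative values, and the single value $0$, for a total of $N(N-1)+1$.

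Both steps are elementary, and I expect no real obstacle. The only point needing care is ruling out cross-type coincidences in $P_N+P_N$, namely that $2^{a+1}$ never equals $2^c+2^d$ with $c<d$; the binary-digit count argument settles this.
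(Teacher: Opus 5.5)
Your proposal is correct and follows the paper's proof essentially step for step. The count of $P_N+Q_N$ via the factorization $2^b(2^{a-b}-1)$, whose second factor is odd, is identical to the paper's. For $P_N+P_N$ you invoke uniqueness of binary expansions, while the paper pins down the larger exponent by the size comparison $2^j<2^i+2^j\le 2^{j+1}$ for $i\le j$; this is only a cosmetic difference.
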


\begin{proof}
Since $Q_N=-P_N$, we have
\[
P_N+Q_N=P_N-P_N=
\big\{2^i-2^j:\, 0\leq i,j\leq N-1\big\}.
\]
The value $0$ occurs when $i=j$. We claim that all strictly positive values
\[
2^i-2^j,
\qquad
0\leq j<i\leq N-1,
\]
are distinct. Suppose
\[
2^i-2^j=2^k-2^\ell,
\qquad
i>j,
\qquad
k>\ell.
\]
Then
\[
2^j(2^{i-j}-1)=2^\ell(2^{k-\ell}-1).
\]
Since $2^{i-j}-1$ and $2^{k-\ell}-1$ are odd, it follows that
$2^j$ divides $2^\ell$ and that $2^\ell$ divides $2^j$. Hence $j=\ell$, and then also $i=k$. There are exactly $N(N-1)/2$ pairs $(i,j)$ with $0\leq j<i\leq N-1$. Hence there are
$N(N-1)/2$ strictly positive elements of $P_N+Q_N$. By symmetry, it has the same number of negative elements. Together with the single value $0$,
we obtain
\[
|P_N+Q_N|
=
\frac{N(N-1)}2+\frac{N(N-1)}2+1
=
N(N-1)+1.
\]

Similarly,
\[
P_N-Q_N=P_N+P_N
=
\big\{2^i+2^j:\,0\leq i,j\leq N-1\big\}.
\]
We claim that the values
\[
2^i+2^j,
\qquad
0\leq i\leq j\leq N-1,
\]
are all distinct. Suppose
\[
2^i+2^j=2^k+2^\ell,
\qquad
i\leq j,
\qquad
k\leq \ell.
\]
If $j>\ell$, then
\[
2^k+2^\ell\leq 2^\ell+2^\ell=2^{\ell+1}\leq 2^j,
\]
whereas
\[
2^i+2^j>2^j,
\]
a contradiction. Similarly, $\ell>j$ is impossible. Hence $j=\ell$, which implies $2^i=2^k$, and therefore $i=k$. Thus the sums are distinct. There are exactly $N(N+1)/2$ pairs $(i,j)$ with $0\leq i\leq j\leq N-1$. Therefore
\[
|P_N-Q_N|
=
\frac{N(N+1)}2.
\]
This completes the proof.
\end{proof}

We now derive the consequence of Lemma~\ref{lem:failure_restricted_aux} that
will be used in the proof of Theorem~\ref{thm:counterexamples}. For $\beta>0$, set
\[
R_N(\beta)
:=
\frac{|P_N+Q_N|^{1+\beta}}
{|P_N|\,|Q_N|\,|P_N-Q_N|^\beta}.
\]
By Lemma~\ref{lem:failure_restricted_aux}, we obtain 
\[
R_N(\beta)
=
\frac{(N(N-1)+1)^{1+\beta}}
{N^2\left(\frac{N(N+1)}2\right)^\beta}.
\]
Writing
\[
N(N-1)+1
=
N^2\left(1-\frac1N+\frac1{N^2}\right)
\]
and
\[
\frac{N(N+1)}2
=
\frac{N^2}{2}\left(1+\frac1N\right),
\]
we get
\[
R_N(\beta)
=
2^\beta
\frac{
\left(1-\frac1N+\frac1{N^2}\right)^{1+\beta}
}
{
\left(1+\frac1N\right)^\beta
}.
\]
Therefore
\[
R_N(\beta)\to 2^\beta
\qquad
\text{as } N\to\infty.
\]
Since $2^\beta>1$, there exists $N=N(\beta)\geq2$ such that
\begin{equation}\label{eq:beta-base-defect}
R_N(\beta)
=
\frac{|P_N+Q_N|^{1+\beta}}
{|P_N|\,|Q_N|\,|P_N-Q_N|^\beta}
>1.
\end{equation}
Indeed, if
\[
\delta:=\frac{2^\beta-1}{2}>0,
\]
then for all sufficiently large $N$ we have
\[
R_N(\beta)>2^\beta-\delta=\frac{2^\beta+1}{2}>1.
\]

\subsection{Proof of Theorem~\ref{thm:counterexamples}}
Let $N=N(\beta)$ be chosen so that~\eqref{eq:beta-base-defect} holds, and set
\[
P:=P_N,
\qquad
Q:=Q_N,
\]
where $P_N$ and $Q_N$ are the sets from Lemma~\ref{lem:failure_restricted_aux}.
Thus
\begin{equation}\label{eq:fixed-base-defect}
\frac{|P+Q|^{1+\beta}}
{|P|\,|Q|\,|P-Q|^\beta}
>1.
\end{equation}

For each $k\in\N$, set
\[
\theta_j:=\pi^j,
\qquad
j=0,\ldots,k-1.
\]
Since $\pi$ is transcendental, the numbers
\[
\theta_0,\theta_1,\ldots,\theta_{k-1}
\]
are linearly independent over $\Q$. Define
\[
\Phi_k(x_0,\ldots,x_{k-1})
:=
\sum_{j=0}^{k-1}x_j\theta_j.
\]
Since the numbers $\theta_j$ are linearly independent over $\Q$, the map
$\Phi_k$ is injective on every subset of $\Z^k$.

We now set
\[
P^{(k)}:=\Phi_k(P^k),
\qquad
Q^{(k)}:=\Phi_k(Q^k),
\]
where $P^k$ denotes the set of all $k$-tuples with entries in $P$, namely
\[
P^k
=
\big\{(p_0,\ldots,p_{k-1}):\, p_j\in P,\ j=0,\ldots,k-1\big\},
\]
and similarly for $Q^k$. 
Hence
\[
|P^{(k)}|=|P^k|=|P|^k,
\qquad
|Q^{(k)}|=|Q^k|=|Q|^k.
\]
Moreover, by linearity of $\Phi_k$,
\[
P^{(k)}+Q^{(k)}
=
\Phi_k\big((P+Q)^k\big),
\]
and therefore
\[
|P^{(k)}+Q^{(k)}|
=
|(P+Q)^k|
=
|P+Q|^k.
\]
Similarly,
\[
P^{(k)}-Q^{(k)}
=
\Phi_k((P-Q)^k),
\]
and so
\[
|P^{(k)}-Q^{(k)}|
=
|(P-Q)^k|
=
|P-Q|^k.
\]

Next, fix $k\in\N$. Since the sets
\[
P^{(k)},
\qquad
Q^{(k)},
\qquad
\frac12\big(P^{(k)}-Q^{(k)}\big),
\qquad
\frac12\big(P^{(k)}+Q^{(k)}\big)
\]
are finite, we may choose $\eta>0$ so small that the intervals centered at
distinct points of each of these sets are pairwise disjoint. Write
\[
I_\eta:=(-\eta,\eta).
\]
Thus, for instance,
\[
P^{(k)}+I_\eta
=
\bigcup_{p\in P^{(k)}}(p-\eta,p+\eta),
\]
and the intervals in this union are pairwise disjoint. Define
\[
E_1:=P^{(k)}+I_\eta,
\qquad
E_2:=Q^{(k)}+I_\eta,
\]
and
\[
A:=\frac12\big(P^{(k)}-Q^{(k)}\big)+I_\eta.
\]
Then
\[
|E_1|=2\eta |P^{(k)}|=2\eta |P|^k,
\]
\[
|E_2|=2\eta |Q^{(k)}|=2\eta |Q|^k,
\]
and
\[
|A|=2\eta |P^{(k)}-Q^{(k)}|=2\eta |P-Q|^k.
\]

Now set
\[
H(x):=
\int_{\R}
\chi_{E_1}(x+y) \, \chi_{E_2}(x-y) \, \chi_A(y)\,\dy.
\]
We obtain a lower bound for a level set of $H$. Let
\[
s\in P^{(k)}+Q^{(k)}.
\]
Choose one representation
\[
s=p+q,
\qquad
p\in P^{(k)},
\qquad
q\in Q^{(k)}.
\]
If
\[
x\in \frac{s}{2}+I_{\eta/4}
\]
and
\[
y\in \frac{p-q}{2}+I_{\eta/4},
\]
then
\[
x+y\in p+I_\eta\subseteq E_1, \qquad 
x-y\in q+I_\eta\subseteq E_2,
\]
and
\[
y\in \frac12\big(P^{(k)}-Q^{(k)}\big)+I_\eta=A.
\]
Thus, for every
\[
x\in \frac{s}{2}+I_{\eta/4},
\]
we have
\[
H(x)
\geq
\int_{\frac{p-q}{2}+I_{\eta/4}}1\,\rd y
=
|I_{\eta/4}|
=
\frac{\eta}{2}.
\]
Consequently,
\[
H(x)>\frac{\eta}{4}
\]
on the set
\[
S_k
:=
\bigcup_{s\in P^{(k)}+Q^{(k)}}
\left(
\frac{s}{2}+I_{\eta/4}
\right).
\]
By the choice of $\eta$, the intervals in this union are disjoint. Therefore
\[
|S_k|
=
\frac{\eta}{2}|P^{(k)}+Q^{(k)}|
=
\frac{\eta}{2}|P+Q|^k.
\]
Since
\[
S_k\subseteq \{H>\eta/4\},
\]
the definition of the weak $L^{1/(1+\beta)}$ quasi-norm gives
\begin{align*}
\|H\|_{1/(1+\beta),\infty}
&\geq
\frac{\eta}{4}|S_k|^{1+\beta} \\
&=
\frac{\eta}{4}
\left(
\frac{\eta}{2}|P+Q|^k
\right)^{1+\beta} \\
&=
\frac{\eta^{2+\beta}}{2^{3+\beta}}
|P+Q|^{k(1+\beta)}.
\end{align*}
On the other hand,
\begin{align*}
|E_1|\,|E_2|\,|A|^\beta
&=
(2\eta |P|^k)(2\eta |Q|^k)
(2\eta |P-Q|^k)^\beta \\
&=
(2\eta)^{2+\beta}
\big(|P|\,|Q|\,|P-Q|^\beta\big)^k.
\end{align*}
Consequently,
\[
\frac{\|H\|_{1/(1+\beta),\infty}}
{|E_1|\,|E_2|\,|A|^\beta}
\geq
\frac{1}{2^{5+2\beta}}
\left(
\frac{|P+Q|^{1+\beta}}
{|P|\,|Q|\,|P-Q|^\beta}
\right)^k.
\]
By~\eqref{eq:fixed-base-defect}, the factor in parentheses is strictly larger than $1$. Letting $k\to\infty$, the ratio tends to infinity. Hence no constant $C$ can make~\eqref{eq:counterexamples} hold for all measurable sets $E_1,E_2,A\subseteq\R$ of finite measure.
\qed

\subsection{Related positive results for structured kernels}

 Theorem~\ref{thm:counterexamples} shows that a natural family of restricted weak-type estimates with target exponent $p<1$ cannot hold uniformly over arbitrary measurable kernel sets, even in the one-dimensional bilinear case. This does not rule out estimates for fixed kernels or for classes of kernels with additional structure.

We now pass to the corresponding bilinear one-parameter operator on $\R^n$,
obtained from~\eqref{eq:one_parameter_operator} by taking $m=2$,
$\alpha_1=-1$, and $\alpha_2=1$, and record two complementary positive results.

First, we consider cube kernels. By a \emph{cube} we mean a half-open cube
with sides parallel to the coordinate axes. A direct adaptation of the
argument in~\cite[Lemma~4.1]{alves2026bilinear} gives the strong endpoint
estimate
\[
L^1\times L^1\longrightarrow L^{1/2}.
\]

\begin{proposition}\label{prop:cube}
There exists a positive constant $C_n$, depending only on $n$, such that
\begin{equation}\label{eq:cube}
\big\|T_{\chi_Q}(f_1,f_2)\big\|_{1/2}
\leq
C_n \, |Q| \, \|f_1\|_1 \, \|f_2\|_1,
\end{equation}
for every cube
$Q\subseteq\R^n$ and all integrable functions $f_1,f_2$ on $\R^n$.
\end{proposition}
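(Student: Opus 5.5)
The plan is to reduce to the unit cube by translation and dilation, and then prove the unit-cube estimate by localizing to a grid of unit cubes and using the concavity of $t\mapsto t^{1/2}$.

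\emph{Step 1: reduction to $Q_0=[0,1)^n$.} Write $Q=c+\ell Q_0$ with $c\in\R^n$ and $\ell>0$. The substitution $y=c+z$ gives
\[
T_{\chi_Q}(f_1,f_2)=T_{\chi_{\ell Q_0}}\big(f_1(\cdot+c),\,f_2(\cdot-c)\big),
\]
and translations preserve the $L^1$ norms, so we may take $c=0$. Next set $f_i^\ell(u):=f_i(\ell u)$. The substitution $y=\ell z$ gives
\[
T_{\chi_{\ell Q_0}}(f_1,f_2)(x)=\ell^n\,T_{\chi_{Q_0}}(f_1^\ell,f_2^\ell)(x/\ell).
\]
Taking the $L^{1/2}$ quasi-norm, the change of variables $x\mapsto x/\ell$ contributes a factor $\ell^{2n}$. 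If the unit-cube estimate holds, we get
\[
\big\|T_{\chi_{\ell Q_0}}(f_1,f_2)\big\|_{1/2}\leq \ell^{3n}C_n\|f_1^\ell\|_1\|f_2^\ell\|_1=\ell^{3n}\ell^{-2n}C_n\|f_1\|_1\|f_2\|_1=C_n|Q|\,\|f_1\|_1\|f_2\|_1.
\]
So it suffices to treat $Q=Q_0$ with $|Q|=1$.

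\emph{Step 2: localization for $Q_0$.} We may take $f_1,f_2\geq0$. Tile $\R^n$ by the unit cubes $Q_k:=k+Q_0$, $k\in\Z^n$, and let $\widetilde Q_k:=k+[-1,2)^n$. If $x\in Q_k$ and $y\in Q_0$, then both $x+y$ and $x-y$ lie in $\widetilde Q_k$. Hence on $Q_k$,
\[
T_{\chi_{Q_0}}(f_1,f_2)=T_{\chi_{Q_0}}\big(f_1\chi_{\widetilde Q_k},\,f_2\chi_{\widetilde Q_k}\big).
\]
Since $|Q_k|=1$, Jensen's (or H\"older's) inequality gives $\int_{Q_k}T^{1/2}\leq\big(\int_{Q_k}T\big)^{1/2}$. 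For the integral over $Q_k$, extend the domain to all $x$ and change variables $u=x+y$, $v=x-y$, which has Jacobian $2^{-n}$. This gives
\[
\int_{Q_k}T_{\chi_{Q_0}}(f_1,f_2)\,\rd x\leq 2^{-n}\,\|f_1\chi_{\widetilde Q_k}\|_1\,\|f_2\chi_{\widetilde Q_k}\|_1.
\]

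\emph{Step 3: summation.} We have
\[
\|T\|_{1/2}^{1/2}=\sum_k\int_{Q_k}T^{1/2}\leq 2^{-n/2}\sum_k\|f_1\chi_{\widetilde Q_k}\|_1^{1/2}\,\|f_2\chi_{\widetilde Q_k}\|_1^{1/2}.
\]
Apply Cauchy--Schwarz in $k$. Each point of $\R^n$ belongs to exactly $3^n$ of the cubes $\widetilde Q_k$, so $\sum_k\|f_i\chi_{\widetilde Q_k}\|_1=3^n\|f_i\|_1$. This yields
\[
\|T\|_{1/2}^{1/2}\leq 2^{-n/2}\,3^n\,\|f_1\|_1^{1/2}\|f_2\|_1^{1/2}.
\]
Squaring gives the estimate with $C_n=9^n/2^n$.

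The main point is Step 2. The support restriction $y\in Q_0$ is what allows concavity to be used one unit cube at a time. This is exactly the structure that the arbitrary kernel sets in Theorem~\ref{thm:counterexamples} lack. The remaining steps are routine bookkeeping.
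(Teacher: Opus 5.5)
Your proof is correct and complete, with $C_n=(9/2)^n$. The scaling factor $\ell^{3n}\ell^{-2n}$, the inclusion $x\pm y\in\widetilde Q_k$, the Jacobian $2^{-n}$ of $(x,y)\mapsto(x+y,x-y)$, and the exact $3^n$-fold overlap of the cubes $\widetilde Q_k$ all check out.

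The paper gives no in-text proof of this proposition. It only says the estimate follows by a direct adaptation of Lemma~4.1 of the cited bilinear paper, which is presumably the standard localization scheme you carry out: tile at the scale of the kernel's support, apply H\"older on each tile, bound the local $L^1$ norm by the change of variables, and sum with Cauchy--Schwarz and bounded overlap.
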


Next, we consider weak kernels with radial monotonicity. In this case, the weak
$L^r$ condition gives a pointwise estimate for the kernel, and this allows us to
dominate $T_g$ by the bilinear fractional integral
\begin{equation}\label{eq:B-alpha-definition}
B_\alpha(f_1,f_2)(x)
:=
\int_{\R^n}
f_1(x+y) \, f_2(x-y) \,|y|^{\alpha-n}\,\rd y,
\qquad 0 < \alpha < n.
\end{equation}

We say that a nonnegative measurable function $g$ on $\R^n$ is \emph{radially decreasing} if there exists a nonincreasing function $\psi:(0,\infty)\to[0,\infty)$ such that
\[
g(y)=\psi(|y|) \qquad \text{for a.e.\ } y \in \R^n.
\]

\begin{lemma}\label{lem:radial-weak-pointwise}
Let $1<r<\infty$, and let $g$ be a radially decreasing function on $\R^n$.
If $g\in L^{r,\infty}(\R^n)$, then there exists a positive constant $C=C(n,r)$ such that 
\begin{equation}\label{eq:radial-weak-pointwise}
g(y) \leq C \, \|g\|_{r,\infty} \, |y|^{-n/r} \qquad \text{for a.e.\ } y \in \R^n.
\end{equation}
\end{lemma}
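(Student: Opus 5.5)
The plan is to use radial monotonicity to turn the single weak-type bound for one level set into a pointwise bound: for a radially decreasing function, the superlevel set containing a point $y$ contains the whole ball of radius $|y|$. First I modify $g$ on a null set so that $g(y)=\psi(|y|)$ for every $y\neq 0$, with $\psi$ nonincreasing. This changes neither $\|g\|_{r,\infty}$ nor the almost-everywhere statement \eqref{eq:radial-weak-pointwise}, so it costs nothing.

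Fix $y\neq0$ with $g(y)>0$, and let $0<\lambda<g(y)=\psi(|y|)$. For every $z$ with $0<|z|\leq|y|$, monotonicity gives $\psi(|z|)\geq\psi(|y|)>\lambda$, hence $g(z)>\lambda$. Therefore
\[
B(0,|y|)\setminus\{0\}\subseteq\{g>\lambda\},
\]
and so $d_g(\lambda)\geq\omega_n|y|^n$, where $\omega_n$ is the volume of the unit ball in $\R^n$.

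The weak norm controls the distribution function. With the definition of $\|\cdot\|_{r,\infty}$ through $f^*$ used in the paper, one has $\|g\|_{r,\infty}=\sup_{\lambda>0}\lambda\, d_g(\lambda)^{1/r}$; see \cite[Chapter~1]{grafakos2014classical}. Hence
\[
\lambda\,\omega_n^{1/r}|y|^{n/r}\leq\|g\|_{r,\infty}.
\]
Letting $\lambda\uparrow g(y)$ yields $g(y)\leq\omega_n^{-1/r}\|g\|_{r,\infty}|y|^{-n/r}$. So the lemma holds with $C=\omega_n^{-1/r}$, for every $y\neq0$ after the modification, and therefore almost everywhere for the original $g$.

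There is no real obstacle here. The only points needing care are the null-set modification, so that the pointwise inclusion holds everywhere, and the identification of the weak quasi-norm with $\sup_{\lambda>0}\lambda d_g(\lambda)^{1/r}$. For the latter, one can alternatively argue directly with the rearrangement: since $d_g(\lambda)\geq t:=\omega_n|y|^n$ for all $\lambda<g(y)$, we get $g^*(t)\geq g(y)$ (or use $g^*(t-)$ and a limiting argument). Then $t^{1/r}g(y)\leq\|g\|_{r,\infty}$, which gives the same constant.
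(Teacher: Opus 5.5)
Your proof is correct and follows the paper's argument essentially step for step: modify $g$ on a null set, observe that $B(0,|y|)\subseteq\{g>\lambda\}$ (up to the origin) for $0<\lambda<g(y)$, apply the weak-type bound, and let $\lambda\to g(y)$, which gives $C=\omega_n^{-1/r}$. Your extra care about the origin and about using $g^*(t-)$ rather than $g^*(t)$ in the rearrangement variant is sound, but it does not change the route.
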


\begin{proof}
After modifying $g$ on a set of measure zero, we may assume that
$
g(y)=\psi(|y|)
$
for every $y\neq0$, where $\psi$ is nonincreasing. Fix $y\neq0$ and note that if $0<\lambda<g(y)$, then 
\[
g(z)\geq g(y)>\lambda
\qquad
\text{whenever } |z|\leq |y|.
\]
Therefore
$
B(0,|y|)\subseteq \big\{ g>\lambda\big\}$,
and hence
\[
c_n|y|^n
=
|B(0,|y|)|
\leq
|\{g>\lambda\}|.
\]
It follows that
\[
\lambda
(c_n|y|^n)^{1/r}
\leq
\lambda |\{g>\lambda\}|^{1/r}
\leq
\|g\|_{r,\infty},
\]
for all $0 < \lambda < g(y)$. Letting $\lambda\to g(y)$ gives~\eqref{eq:radial-weak-pointwise}.
\end{proof}

For $1<r<\infty$, define
\begin{equation}\label{eq:alpha-from-r}
\alpha
:=
n\left(1-\frac1r\right).
\end{equation}
Then $0<\alpha<n$ and
\[
|y|^{-n/r}
=
|y|^{\alpha-n}.
\]

Let $\mathcal P_\alpha$ denote the interior of the pentagon in $[0,1]^2$ with
vertices
\[
(1,1),
\qquad
(1,0),
\qquad
(\alpha/n,0),
\qquad
(0,\alpha/n),
\qquad
(0,1).
\]
We use the known bounds for $B_\alpha$; see
\cite{kenig1999multilinear,grafakos2001remarks,alves2026uniform}. If
\[
\left(\frac1{p_1},\frac1{p_2}\right)\in\mathcal P_\alpha
\qquad\text{and}\qquad
\frac1p+\frac{\alpha}{n}
=
\frac1{p_1}+\frac1{p_2},
\]
then there exists a positive constant $C=C(\alpha,n,p_1,p_2)$ such that
\[
\big\|B_\alpha(f_1,f_2)\big\|_p
\leq
C \,
\|f_1\|_{p_1}
\|f_2\|_{p_2},
\]
for all $f_i\in L^{p_i}(\R^n)$, $i=1,2$.

Combining Lemma~\ref{lem:radial-weak-pointwise} with these bounds for $B_\alpha$ gives the following immediate consequence.

\begin{proposition}\label{prop:radial-weak-bounds}
Let $1<r<\infty$, let $\alpha$ be given by~\eqref{eq:alpha-from-r}, and let
$g$ be a nonnegative radially decreasing function in $L^{r,\infty}(\R^n)$.
Suppose that
\[
\left(\frac1{p_1},\frac1{p_2}\right)\in\mathcal P_\alpha
\]
and define $0 < p < \infty$ by
\[
\frac1p+\frac{\alpha}{n}
=
\frac1{p_1}+\frac1{p_2}.
\]
Then there exists a positive constant
$C=C(n,r,p_1,p_2)$ such that
\begin{equation}\label{eq:radial-weak-bounds}
\big\|T_g(f_1,f_2)\big\|_p
\leq
C \,
\|f_1\|_{p_1} \,
\|f_2\|_{p_2} \, \|g\|_{r,\infty}
\end{equation}
for all $f_1\in L^{p_1}(\R^n)$ and $f_2\in L^{p_2}(\R^n)$.
\end{proposition}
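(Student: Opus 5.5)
The plan is to reduce Proposition~\ref{prop:radial-weak-bounds} to the quoted bounds for $B_\alpha$ by means of Lemma~\ref{lem:radial-weak-pointwise}. As elsewhere in the paper, we may assume $f_1,f_2\geq0$, since $|T_g(f_1,f_2)|\leq T_g(|f_1|,|f_2|)$ and $g\geq0$ by hypothesis. Lemma~\ref{lem:radial-weak-pointwise} gives
\[
g(y)\leq C(n,r)\,\|g\|_{r,\infty}\,|y|^{-n/r}
\]
for a.e.\ $y$, and by the choice~\eqref{eq:alpha-from-r} of $\alpha$ we have $|y|^{-n/r}=|y|^{\alpha-n}$.

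Next, insert this bound into the integrand of $T_g(f_1,f_2)(x)$. For every $x$, the null set where the pointwise bound fails does not affect the $y$-integral, and all quantities involved are nonnegative. This yields the pointwise domination
\[
0\leq T_g(f_1,f_2)(x)\leq C(n,r)\,\|g\|_{r,\infty}\,B_\alpha(f_1,f_2)(x)
\qquad\text{for all } x\in\R^n.
\]

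Finally, take the $L^p$ quasi-norm, which is monotone for nonnegative functions even when $p<1$. Then apply the known bound
\[
\|B_\alpha(f_1,f_2)\|_p\leq C\,\|f_1\|_{p_1}\|f_2\|_{p_2},
\]
which is valid because $(1/p_1,1/p_2)\in\mathcal P_\alpha$ and the exponents satisfy $1/p+\alpha/n=1/p_1+1/p_2$. This gives~\eqref{eq:radial-weak-bounds}. The constant depends only on $n,r,p_1,p_2$, since $\alpha$ is determined by $n$ and $r$.

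There is no genuine obstacle here. The one point to check is that $p$ is well defined and positive. This holds because the open pentagon $\mathcal P_\alpha$ lies in the region $1/p_1+1/p_2>\alpha/n$, since the edge joining $(\alpha/n,0)$ and $(0,\alpha/n)$ is the line $1/p_1+1/p_2=\alpha/n$.
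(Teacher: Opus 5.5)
Your proposal is correct and follows the paper's own argument: the paper presents the proposition as an immediate consequence of the pointwise bound $g(y)\le C\,\|g\|_{r,\infty}|y|^{\alpha-n}$ from Lemma~\ref{lem:radial-weak-pointwise}, which gives $T_g(f_1,f_2)\le C\,\|g\|_{r,\infty}B_\alpha(f_1,f_2)$, combined with the quoted $L^{p_1}\times L^{p_2}\to L^p$ bounds for $B_\alpha$ on $\mathcal P_\alpha$. Your additional checks are both right and are left implicit in the paper: the a.e.\ exceptional set does not affect the $y$-integral, and $p$ is well defined because the open pentagon lies in the region $1/p_1+1/p_2>\alpha/n$.
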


Since $\mathcal P_\alpha$ contains points for which
\[
\frac1{p_1}+\frac1{p_2}-\frac{\alpha}{n}>1,
\]
Proposition~\ref{prop:radial-weak-bounds} includes estimates with $p<1$.
Thus radial monotonicity gives an additional condition on the kernel under which
bounds in the quasi-Banach range may hold.


\end{document}